\numberwithin{equation}{section}
\date{}
\newtheorem{theorem}{Theorem}[section]
\newtheorem*{theorem*}{Theorem}
\newtheorem{lemma}[theorem]{Lemma}
\newtheorem{corollary}[theorem]{Corollary}
\theoremstyle{remark}
\newtheorem{remark}{Remark}
\begin{document}
\renewcommand{\thefootnote}{\fnsymbol{footnote}}

% \footnotetext{1. School of Mathematical Sciences, CMA-Shanghai, Shanghai Jiao Tong University, China( xmq157@sjtu.edu.cn);}
% \footnotetext{The first author is partially supported by}
\footnotetext{The first and second authors are partially supported by NSFC-12031012, NSFC-11831003 and the Institute of Modern Analysis-A Frontier Research Center of Shanghai.}
\footnotetext{The third author is partially supported by Natural Science Foundation of He’nan Province of China (Grant No. 222300420499).}
\renewcommand{\thefootnote}{\arabic{footnote}}

\title{A priori estimates for higher-order fractional Laplace equations}

% \author{  }

\author[1]{Yugao Ouyang}
\author[2]{Meiqing Xu}
\author[3]{Ran Zhuo}
% \affil[1]{School of Mathematics and Statistics, Xi'an Jiaotong University}
\affil[1,2]{School of Mathematical Sciences, Shanghai Jiao Tong University, Shanghai, China}
% \affil[2]{School of Mathematical Sciences, Shanghai Jiao Tong University}
\affil[3]{Mathematics and Science College,
Shanghai Normal University
Shanghai, China; Department of Mathematics and Statistics, Huanghuai University, Henan, China}

\maketitle
\begin{abstract}
\noindent
% In this paper, we make a priori estimates on high-order fractional Laplacian equation. To overcome the difficulty due to integer part of Laplacian, we split the equation into a system and make estimates on every equation in the system. The core of our strategy is to find a proper parameter to scale the domain. Such a parameter can bring a uniform bound for all equations in the system, which allows us to apply Schauder estimates on the system.

% In this paper, we use the method of moving planes to give the Liouville theorem of the higher-order fractional Laplacian equations under the local boundedness condition. Then we establish a priori estimates for the equations. To address the challenge posed by the integer part of the Laplacian operator, we divide the higher-order fractional Laplacian equation into a system, and provide estimates for each equation in the system. The crux of our approach is to identify a suitable scaling parameter for the domain, which yields a uniform bound for all equations in the system. 

In this paper, we establish a priori estimates for the positive solutions to a higher-order fractional Laplace equation on a bounded domain by a blowing-up and rescaling argument. To overcome the technical difficulty due to the high-order and fractional order mixed operators, we divide the high-order fractional Laplacian equation into a system, and provide uniform estimates for each equation in the system. Finding a proper scaling parameter for the domain is the crux of rescaling argument to the above system, and the new idea is introduced in the rescaling proof, which may hopefully be applied to many other system problems. In order to derive a contradiction in the blowing-up proof, combining the moving planes method and suitable Kelvin transform, we prove a key Liouville-type theorem under a weaker regularity assumption in a half space. 

% This, in turn, enables us to apply Schauder estimates to the system, and establish a priori bounds for the higher-order fractional Laplacian equation.
% \begin{equation}
% \left\{
%     \begin{aligned}
%   &(-\Delta)^{\frac{\alpha}{2}+m}u(x)=u^p(x),\quad x\in \Omega,\\
%   &(-\Delta)^i u= 0,\quad x\in \partial\Omega,\quad i=1,2,...,m-1,\\
%   &(-\Delta)^m u \equiv 0,\quad x\in \Omega^c.
% \end{aligned}
% \right.
% \end{equation}
% We use a direct blowing-up and rescaling argument to derive a priori estimates on positive solutions. This method was first developed to obtain a priori estimates of fractional Laplacian equation, and we extend the result to high-order fractional Laplacian.

\noindent{\bf{Keywords}}: Higher-order fractional Laplacian, a priori estimates, Liouville type theorem, blowing-up, rescaling.

\noindent{\bf {MSC 2010}}: 35B09, 35B44, 35B45, 35R11  % pseudodifferential operators.
\end{abstract}

%\tableofcontents
\section{Introduction}

The derivation of suitable a priori estimates plays an important role in the modern theory of nonlinear partial differential equations. Such estimates are used to derive the existence of solutions and the regularity estimates that ensure that generalized
solutions are actually smooth in some sense.
In this paper, we consider the
question of obtaining a priori estimates for the solutions of the following higher order fractional Laplace equation
\begin{equation} \label{in-1}
\left\{
    \begin{aligned}
  &(-\Delta)^{\frac{\alpha}{2}+m}u(x)=u^p(x),\quad x\in \Omega,\\
  &(-\Delta)^i u= 0,\quad x\in \partial\Omega,\quad i=0,1,...,m-1,\\
  &(-\Delta)^m u \equiv 0,\quad x\in \Omega^c.
\end{aligned}
\right.
\end{equation}
where $\Omega$ is a bounded domain with $C^2$ boundary, and $n\geq 2$, $0<\alpha<2$, $m\in\mathbb{N}^+$.

For $\alpha$ taking any real number between 0 and 2, $(-\Delta)^\frac{\alpha}{2}$ is a nonlocal differential operator defined by
\begin{equation}\label{fraction definition}
  (-\Delta)^\frac{\alpha}{2}u(x)=C_{n,\alpha}P.V. \int_{\mathbb{R}^n} \frac{u(x)-u(y)}{|x-y|^{n+\alpha}}dy.
\end{equation}
where P.V. is the Cauchy principal value. The integral on the right-hand
side of (\ref{fraction definition}) is well defined for $u\in C_{loc}^{[\alpha],\{\alpha\}+\epsilon}(\mathbb{R}^n) \cap\mathcal{L}_\alpha(\mathbb{R}^n)$, where $\epsilon>0$ is arbitrary small, $[\alpha]$ denotes the integer part of $\alpha$, $\{\alpha\}=\alpha-[\alpha]$,
\begin{equation}
  \mathcal{L}_\alpha(\mathbb{R}^n)=\{u:\mathbb{R}^n\rightarrow\mathbb{R}| \int_{\mathbb{R}^n} \frac{|u(x)|}{(1+|x|^{n+\alpha})}dx <\infty\}.
\end{equation}
% The condition $u\in\mathcal{L}_\alpha(\mathbb{R}^n)$ is to ensure the integrability on outside domain of the integral in (\ref{fraction definition}). The condition $u\in C_{loc}^{[\alpha],\{\alpha\}+\epsilon}(\mathbb{R}^n)$ is to ensure the integrability around $y=x$ and the continuity of $(-\Delta)^\frac{\alpha}{2}u(x)$. For example, when $0<\alpha<1$, we have
% \begin{align}
%   \int_{B_1(x)}\frac{|u(x)-u(y)|}{|x-y|^{n+\alpha}}dy \leq \int_{B_1(x)}\frac{C}{|x-y|^{n-\epsilon}}dy<\infty.
% \end{align}
% When $1\leq \alpha <2$,
% \begin{align}
%     &PV\int_{B_1(x)}\frac{u(x)-u(y)}{|x-y|^{n+\alpha}}dy\\
%     = &\lim_{\epsilon\to 0}\int_{B_1(x)\backslash B_\epsilon(x)}\frac{-\nabla u(x)\cdot(y-x)+ O(|y-x|^2)}{|y-x|^{n+\alpha}} dy\\
%     = & \lim_{\epsilon\to 0}\int_{B_1(x)\backslash B_\epsilon(x)}\frac{O(|y-x|^2)}{|y-x|^{n+\alpha}} dy\\
%     \leq &\lim_{\epsilon\to 0}\int_{B_1(x)\backslash B_\epsilon(x)}\frac{1}{|y-x|^{n+\alpha-2}}dy<\infty.
% \end{align}
For more details see \cite{silvestre,chen2020fractional}.

% In this paper, the high-order fractional operator $(-\Delta)^{\frac{\alpha}{2}+m}$ is defined by $(-\Delta)^\frac{\alpha}{2}\circ(-\Delta)^m$, i.e. we first apply the classical Laplacian operator and then apply the fractional Laplacian operator. So it is natural to require $(-\Delta)^m u\in\mathcal{L}_\alpha(\mathbb{R}^n)$.
% Also one may notice that $(-\Delta)^\frac{\alpha}{2}$ is a non-local operator. That means, the value of $(-\Delta)^\frac{\alpha}{2}u(x)$ is determined by the value of $u$ in the whole space, instead of the value near $x$.

% In this paper, we define the higher-order fractional operator $(-\Delta)^{\frac{\alpha}{2}+m}$ as $(-\Delta)^\frac{\alpha}{2}\circ(-\Delta)^m$. Therefore, it is natural to require that $(-\Delta)^m u\in\mathcal{L}_\alpha(\mathbb{R}^n)$. Based on the integral form definition of fractional Laplacian (\ref{fraction definition}), it is clear to see the nonlocal property of fractional Laplacian.
%  For example, if $u>0$ in $\Omega$ and $u\equiv 0$ on $\Omega^c$, then for $x_0\in \Omega^c$,
% \begin{equation}
%     (-\Delta)^\frac{\alpha}{2}u(x_0)=C_{n,\alpha}P.V. \int_{\mathbb{R}^n} \frac{-u(y)}{|x-y|^{n+\alpha}}dy<0.
% \end{equation}
% Due to the non-locality of $(-\Delta)^\frac{\alpha}{2}$, we need to impose a stronger boundary condition on fractional part. This explains the boundary condition $(-\Delta)^m u=0$ on $\Omega^c$ of (\ref{in-1}).

In this paper, we introduce the higher-order fractional operator $(-\Delta)^{\frac{\alpha}{2}+m}$, which can be defined as $(-\Delta)^\frac{\alpha}{2}\circ(-\Delta)^m$. As a result, it becomes essential to ensure that $(-\Delta)^m u\in\mathcal{L}_\alpha(\mathbb{R}^n)$. 

The integral form definition of the fractional Laplacian (\ref{fraction definition}) clearly demonstrates its nonlocal property.
For instance, consider the scenario where $u>0$ in $\Omega$, and $u\equiv 0$ on $\Omega^c$. In this case, for $x_0\in \Omega^c$, we have:
\begin{equation}
(-\Delta)^\frac{\alpha}{2}u(x_0)=C_{n,\alpha}P.V. \int_{\mathbb{R}^n} \frac{-u(y)}{|x-y|^{n+\alpha}}dy<0.
\end{equation}
The non-local nature of $(-\Delta)^\frac{\alpha}{2}$ necessitates a stronger boundary condition on the fractional part. This explains the imposition of the boundary condition $(-\Delta)^m u=0$ on $\Omega^c$ in equation (\ref{in-1}).

As $m=0$ in (\ref{in-1}), the equation reduces to the following fractional Laplace equation:
\begin{equation}\label{fractional PDE}
    \left\{
    \begin{aligned}
        (-\Delta)^\frac{\alpha}{2}u(x)=u^p(x),\quad x\in\Omega,\\
        u(x)\equiv 0,\quad x\in \Omega^c.
    \end{aligned}
    \right.
\end{equation}
In \cite{chen2016direct}, Chen, Li, and Li applied a direct method on the fractional operator to derive a priori estimates of positive solutions of the equation (\ref{fractional PDE}) under the subcritical case. They obtained the following result:
\begin{equation}
\|u\|_{L^\infty(\Omega)} \leq C,
\end{equation}
where $C$ is a positive constant independent of $u$. 

Now, when dealing with the higher-order fractional Laplacian, the operator combines features of both the Laplacian and the fractional Laplacian, presenting new challenges compared to the fractional case. To address these challenges and obtain a priori estimates of positive solutions, we propose a novel approach building upon the direct method from \cite{chen2016direct}. For other types of elliptic equations and a priori estimates see \cite{li2021pointwise,chen1997priori,ruiz2004priori,amann1998priori,ros2014dirichlet,ros2015nonexistence,wang2023nonexistence,schippa2021priori,feehan2014schauder}. The main result is as follows:

% Recently, we (\cite{xu2022super}) considered the higher-order fractional Laplace equation, and obtained the integral representation formula of solutions. 
% Inspired by \cite{chen2016direct}, we investigate the higher-order fractional Laplace equation (\ref{in-1}) and get a priori estimates of positive solutions.

\begin{theorem}\label{main theorem}
Let $n\geq 2$, $0<\alpha<2$, $m\in\mathbb{N}^+$. Assume that $\Omega$ is a bounded domain with $C^2$ boundary, $1<p\leq\frac{n+\alpha}{n-2m}$, $u\in C_{loc}^{2m+[\alpha],\{\alpha\}+\epsilon}(\Omega)$, $(-\Delta)^m u\in \mathcal{L_\alpha}(\mathbb{R}^n)$, and $(-\Delta)^i u$ is upper semicontinuous on $\overline{\Omega}$ for $i=0,1,...,m$, $\epsilon>0$ sufficiently small. Suppose $u$ is a positive solution of (\ref{in-1}).
%\begin{equation*}\label{main equation}
%\left\{
%    \begin{aligned}
 % &(-\Delta)^{\frac{\alpha}{2}}(-%\Delta)^m u(x)=u^p(x),\quad x\in \Omega,\\
%  &(-\Delta)^i u= 0,\quad x\in \partial\Omega,\quad i=0,1,...,m-1,\\
 % &(-\Delta)^m u \equiv 0,\quad x\in \Omega^c.
%\end{aligned}
%\right.
%\end{equation*}

Then
\begin{equation}\label{main conclusion}
    \|u\|_{L^\infty(\Omega)}\leq C,
\end{equation}
where $C$ is a positive constant independent of $u$.
\end{theorem}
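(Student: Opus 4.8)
The plan is to argue by contradiction via a blow-up and rescaling procedure, following the scheme of Gidas--Spruck as adapted to nonlocal operators in \cite{chen2016direct}, but with the extra structure needed to handle the mixed high-order/fractional operator. First I would reduce equation \eqref{in-1} to an equivalent system by setting $u_0=u$ and $u_j=(-\Delta)^j u$ for $j=1,\dots,m$, so that $(-\Delta)u_{j-1}=u_j$ in $\Omega$ with $u_{j-1}=0$ on $\partial\Omega$ for $j=1,\dots,m$, while $u_m$ solves the purely fractional equation $(-\Delta)^{\alpha/2}u_m=u_0^{\,p}$ in $\Omega$ with $u_m\equiv 0$ on $\Omega^c$. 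Suppose \eqref{main conclusion} fails: then there is a sequence of positive solutions $u^{(k)}$ with $M_k:=\|u^{(k)}\|_{L^\infty(\Omega)}\to\infty$, attained at points $x_k\in\overline\Omega$; after passing to a subsequence $x_k\to \bar x\in\overline\Omega$.

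Next I would perform the rescaling. The crux, as the abstract emphasizes, is choosing a single scaling parameter $\lambda_k$ that is compatible with every equation of the system simultaneously. By scaling considerations, each application of $(-\Delta)$ costs two derivatives and each application of $(-\Delta)^{\alpha/2}$ costs $\alpha$, so the total homogeneity of the operator is $2m+\alpha$; balancing $(-\Delta)^{\alpha/2+m}$ against $u^p$ forces $\lambda_k = M_k^{-(p-1)/(2m+\alpha)}$. I would then define the rescaled functions $v_j^{(k)}(y) = M_k^{-\mu_j}\, u_j^{(k)}(x_k + \lambda_k y)$ on the dilated domain $\Omega_k := \lambda_k^{-1}(\Omega - x_k)$, choosing the exponents $\mu_j$ so that the Laplacian links $\mu_{j-1}-\mu_j = 2/(2m+\alpha)\cdot(p-1)\cdot(\text{appropriate factor})$ — concretely $\mu_0=1$ and $\mu_j$ decreasing by the amount dictated by each inverse Laplacian — so that all the rescaled equations have coefficients of order $1$. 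One then has $0<v_0^{(k)}\le 1=v_0^{(k)}(0)$, $v_0^{(k)}$ solves $(-\Delta)v_{j-1}^{(k)}=v_j^{(k)}$, $v_j^{(k)}=0$ outside $\Omega_k$ for $j\le m-1$, and $(-\Delta)^{\alpha/2}v_m^{(k)}=(v_0^{(k)})^p$. As $k\to\infty$, $\Omega_k$ converges either to all of $\mathbb{R}^n$ (if $\bar x\in\Omega$, with $\lambda_k\operatorname{dist}(x_k,\partial\Omega)\to\infty$) or to a half-space $\mathbb{R}^n_+$ (if $\bar x\in\partial\Omega$, using the $C^2$ regularity to flatten the boundary). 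Local elliptic and nonlocal regularity estimates, applied equation-by-equation up the system, give uniform $C^{2m+[\alpha],\{\alpha\}+\epsilon}_{loc}$ bounds, so along a subsequence $v_j^{(k)}\to v_j$ locally, and the limit $v=v_0\ge 0$ with $v(0)=1$ solves $(-\Delta)^{\alpha/2+m}v = v^p$ either on $\mathbb{R}^n$ with the vanishing conditions on $(-\Delta)^j v$, or on $\mathbb{R}^n_+$ with the analogous boundary conditions on the flat boundary.

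The final step is to invoke the nonexistence results to reach a contradiction, using $v(0)=1$ to rule out the trivial solution. In the whole-space case $1<p\le \frac{n+\alpha}{n-2m}$ is subcritical/critical for the higher-order fractional equation and the relevant Liouville theorem forces $v\equiv 0$; in the half-space case one uses the key Liouville-type theorem advertised in the abstract, proved by the method of moving planes together with a suitable Kelvin transform under the weakened regularity hypothesis, to again conclude $v\equiv 0$. Either way this contradicts $v(0)=1$, so $M_k$ cannot diverge and \eqref{main conclusion} holds with $C$ independent of $u$.

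I expect the principal obstacle to be the rescaling step for the \emph{system} rather than the Liouville theorems per se: one must verify that the chosen exponents $\mu_j$ make every equation nondegenerate in the limit, that the Dirichlet conditions $u_j=0$ on $\partial\Omega$ survive the rescaling (they are clean in the whole-space case but in the boundary-blow-up case they become the boundary conditions on $\partial\mathbb{R}^n_+$ and require care about the interaction between the flattening diffeomorphism and the nonlocal term $(-\Delta)^{\alpha/2}$), and — most delicately — that the limiting function $v_0$ is not identically zero, i.e. that the normalization point $y=0$ where $v_0^{(k)}=1$ stays at finite distance and the convergence is strong enough to pass $v_0(0)=1$ to the limit. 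A secondary technical point is propagating regularity through the chain of inverse Laplacians while keeping the estimates uniform in $k$ and locally uniform in $y$, since the fractional equation at the top of the system only controls $u_m$, and one must bootstrap downward to control $u_0$ in the norm in which compactness is claimed.
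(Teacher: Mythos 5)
Your overall scheme (blow-up, divide into a system $u_j=(-\Delta)^j u$, rescale with homogeneity $2m+\alpha$, pass to a limit on $\mathbb{R}^n$ or $\mathbb{R}^n_+$, invoke Liouville) matches the paper's, but there are two genuine gaps, and both are places you yourself flagged as ``the principal obstacle'' without resolving them.

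First, the choice of scaling parameter is wrong, or at least unjustified as you have written it. You take $M_k=\|u^{(k)}\|_\infty$ and rescale by $\lambda_k=M_k^{-(p-1)/(2m+\alpha)}$, centering at a maximum point of $u^{(k)}$. This normalizes $v_0^{(k)}(0)=1$, but gives you \emph{no control} on the rescaled $v_j^{(k)}$ for $j\ge 1$: nothing prevents $\|u_j^{(k)}\|_\infty$ from blowing up faster than $M_k^{1+2\beta j}$, in which case your uniform $L^\infty$ bounds for the intermediate levels fail and the bootstrap collapses. The paper's ``new idea'' is precisely to set $\beta=\tfrac{p-1}{2m+\alpha}$ and
\[
m_k=\max_{0\le i\le m}\bigl(\sup_\Omega (-\Delta)^i u_k\bigr)^{\frac{1}{1+2\beta i}},\qquad \lambda_k=m_k^{-\beta},
\]
and to center at the point where the \emph{winning level} $H_{kj}=(-\Delta)^j u_k$ attains this maximum. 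Then every rescaled level $h_{ki}$ satisfies $\|h_{ki}\|_{L^\infty(\Omega_k)}\le 1$, while the normalization $(-\Delta)^j v_k(0)=h_{kj}(0)=1$ holds at \emph{some} level $j$ (not necessarily $j=0$). This simultaneous uniform bound on all levels is what makes the chain of interior Schauder estimates for the inverse Laplacians, and the Silvestre estimate for the fractional top equation, go through uniformly in $k$. With your $M_k$, step one of the bootstrap already has no starting estimate.

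Second, you collapse the boundary case into a single half-space limit, but the paper needs to split $d_k/\lambda_k$ (with $d_k=\mathrm{dist}(x^k,\partial\Omega)$) into three regimes: $\to\infty$ (interior, $\Omega_k\to\mathbb{R}^n$), $\to C>0$ (shifted half-space, handled by the half-space Liouville theorem), and $\to 0$. In the last regime the rescaled normalization point $0$ converges to $\partial\Omega_k$, so you cannot simply pass to a half-space solution and invoke Liouville: the normalized value $h_{kj}(0)=1$ is in danger of colliding with the zero Dirichlet data. The paper handles this by a separate barrier argument: it shows $h_{kj}$ is uniformly H\"older continuous near $p^k=(x^0-x^k)/\lambda_k\to 0$ (exponent $1$ for $j<m$, $\alpha/2$ for $j=m$, built from a Kelvin-transformed $(1-|x|^2)_+^{\alpha/2}$ supersolution using the exterior ball condition from the $C^2$ boundary), which forces $|h_{kj}(0)-h_{kj}(p^k)|\to 0$, contradicting $h_{kj}(0)-h_{kj}(p^k)=1$. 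You acknowledge this as a delicate point (``the normalization point $y=0$ $\ldots$ stays at finite distance'') but offer no mechanism; without the barrier argument this case is unresolved. A minor further discrepancy: you propose flattening the boundary with a diffeomorphism, which interacts badly with the nonlocal operator (as you note), whereas the paper avoids this entirely by working directly in $\Omega_k$ with interior estimates plus the barrier.
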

\begin{remark}
In Theorem \ref{main theorem}, combining with nonnegative property of $(-\Delta)^i u$ derived from the {\itshape maximum principle} (see Theorem 1.1 in \cite{liu2022dirichlet} for $i=m$),
the  upper 
semicontinuity of $(-\Delta)^i u$ implies 
the continuity of  $(-\Delta)^i u$ on $\Bar{\Omega}$.
\end{remark}

% For fractional equation (\ref{fractional PDE}), the basic method is applying a transformation on the domain $\Omega$ and discuss it in three cases and making H\"older estimates respectively. So far there is no result about a priori estimates of high-order fractional Laplacian. Compared with (\ref{fractional PDE}), the high-order fractional equation (\ref{main equation}) has integer parts. So we divide the equation into a system with every single equation being fractional equation or one-order equation and use H\"older estimates not only on fractional Poisson equations but also integer Poisson equations. Our strategy hinges on finding a proper parameter to transform the domain, and the parameter is expected to give a uniform bound for all equations in the system. The parameter is the core of our strategy.

For the fractional equation (\ref{fractional PDE}), the standard approach involves applying a domain transformation and then analyzing the resulting equation in three cases using H\"older estimates. However, there have been no a priori estimates for higher-order fractional Laplacians. In comparison to the fractional equation (\ref{fractional PDE}), the higher-order fractional Laplacian consists of higher-order Laplacian and fractional Laplacian. Hence both the difficulties of higher-order and those of fractional problems will arise. To tackle these, we divide the equation into a system in which one equation is a fractional equation and others are first-order equations, and apply H\"older estimates not only to the fractional Poisson equation but also to the integer Poisson equations. Our strategy relies on identifying a suitable parameter to transform the domain.
% one that can provide a uniform bound for all the equations in the system. This parameter is the cornerstone of our approach.
Unlike previous works where the parameter is chosen as the upper bound of a single equation, we select a parameter that bounds all equations in the system uniformly. This parameter is the core of our strategy, which enables us to apply H\"older estimates not only to fractional Poisson equation but also to integer Poisson equations. In the process of blowing-up, the application of maximum principles is essential. Maximum principles play a crucial role in understanding various properties of the solutions. For different types of maximum principles see \cite{li2018maximum,liu2022dirichlet,li2023maximum}.
% In some crucial estimates we apply the {\itshape maximum principle} in \cite{liu2022dirichlet}. There are various types of maximum principles for fractional Laplacian. For example, in \cite{li2023maximum} the authors studied maximum principles with critical integrability.

We also derive that every $(-\Delta)^i u$ is $L^\infty$-bounded, $i=1,...,m$.
\begin{corollary}\label{corollary}
Let $n\geq 2$, $0<\alpha<2$, $m\in\mathbb{N}^+$. Assume that $\Omega$ is a bounded domain with $C^2$ boundary, $1<p\leq\frac{n+\alpha}{n-2m}$, $u\in C_{loc}^{2m+[\alpha],\{\alpha\}+\epsilon}(\Omega)$, $(-\Delta)^m u\in \mathcal{L_\alpha}(\mathbb{R}^n)$, and $(-\Delta)^i u$ is upper semicontinuous on $\overline{\Omega}$ for $i=0,1,...,m$, $\epsilon>0$ sufficiently small. Suppose $u$ is a positive solution of the (\ref{in-1}), then we have
\begin{equation}
    \|(-\Delta)^i u\|_{L^\infty(\Omega)}\leq C_i,
\end{equation}
for $i=1,...,m$.
\end{corollary}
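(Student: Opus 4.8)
The plan is to leverage Theorem \ref{main theorem} together with the system decomposition that underlies it. First I would set $v_i := (-\Delta)^i u$ for $i = 0, 1, \dots, m$, so that $v_0 = u$ and the single higher-order fractional equation \eqref{in-1} becomes the system
\begin{equation*}
\left\{
\begin{aligned}
&(-\Delta)^{\frac{\alpha}{2}} v_m = u^p \quad \text{in } \Omega, \qquad v_m \equiv 0 \text{ on } \Omega^c,\\
&-\Delta v_{i-1} = v_i \quad \text{in } \Omega, \qquad v_{i-1} = 0 \text{ on } \partial\Omega, \quad i = 1, \dots, m.
\end{aligned}
\right.
\end{equation*}
By Theorem \ref{main theorem} we already have $\|u\|_{L^\infty(\Omega)} \le C$, hence $\|u^p\|_{L^\infty(\Omega)} \le C^p$, which bounds the right-hand side of the fractional equation. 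The strategy is then to propagate this bound downward through the system: first bound $v_m = (-\Delta)^m u$, then $v_{m-1}$, and so on, each time using an $L^\infty$ estimate for the relevant Poisson-type problem on the fixed bounded $C^2$ domain $\Omega$.

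The key steps, in order, are as follows. First, for the fractional Poisson equation $(-\Delta)^{\frac{\alpha}{2}} v_m = u^p$ in $\Omega$ with $v_m \equiv 0$ on $\Omega^c$, I would represent $v_m$ via the Green function for $(-\Delta)^{\frac{\alpha}{2}}$ on $\Omega$ (equivalently, the solution operator whose kernel is comparable to the Riesz kernel near the diagonal); since $u^p \in L^\infty(\Omega)$ and $\Omega$ is bounded, the standard estimate $\|v_m\|_{L^\infty(\Omega)} \le C(\Omega, n, \alpha)\,\|u^p\|_{L^\infty(\Omega)} \le C_m$ follows because the Green kernel is integrable in $y$ uniformly in $x$. (This is precisely one of the H\"older/$L^\infty$ estimates for the fractional Poisson equation invoked in the proof of the main theorem.) Next, working inductively downward, suppose $\|v_i\|_{L^\infty(\Omega)} \le C_i$ is known for some $i \in \{1, \dots, m\}$; then $v_{i-1}$ solves $-\Delta v_{i-1} = v_i$ in $\Omega$ with zero Dirichlet boundary data, so by the classical $L^\infty$ estimate for the Dirichlet Laplacian on a bounded $C^2$ domain (again via the Green representation, whose kernel is integrable), $\|v_{i-1}\|_{L^\infty(\Omega)} \le C(\Omega, n)\,\|v_i\|_{L^\infty(\Omega)} \le C_{i-1}$. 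Iterating from $i = m$ down to $i = 1$ yields the claimed bounds $\|(-\Delta)^i u\|_{L^\infty(\Omega)} \le C_i$ for all $i = 1, \dots, m$, with constants depending only on $n, \alpha, m, p, \Omega$ and the constant $C$ from Theorem \ref{main theorem}.

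I do not expect a genuine obstacle here, since the corollary is essentially a bookkeeping consequence of the apparatus already built for Theorem \ref{main theorem}; the one point requiring a little care is justifying that the $L^\infty$ bounds obtained from the Green representations are the correct (pointwise, classical) solutions rather than merely distributional ones. For this I would invoke the regularity hypotheses already in force — $u \in C_{loc}^{2m+[\alpha],\{\alpha\}+\epsilon}(\Omega)$ and the upper semicontinuity (hence, by the Remark, continuity) of each $(-\Delta)^i u$ on $\overline{\Omega}$ — which guarantee that each $v_i$ is a continuous function on $\overline\Omega$ solving its equation classically, so that the maximum principle and the Green representation apply without ambiguity and the boundary conditions $v_{i-1} = 0$ on $\partial\Omega$ and $v_m \equiv 0$ on $\Omega^c$ are attained pointwise. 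The only mild subtlety is ensuring uniformity of the constants in $u$: this is immediate because every constant above depends solely on the fixed data $(n,\alpha,m,p,\Omega)$ and on $C$, and $C$ is itself independent of $u$ by Theorem \ref{main theorem}.
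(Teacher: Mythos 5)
Your argument is correct, but it takes a genuinely different route from the paper's. The paper proves the corollary by rerunning the contradiction/blow-up argument of Theorem \ref{main theorem} essentially verbatim: if $\max_\Omega (-\Delta)^i w_k \to \infty$ for some $i\in\{1,\dots,m\}$, then the rescaling parameter $m_k = \max_{0\le i\le m}(\sup_\Omega H_{ki})^{1/(1+2\beta i)}$ --- which was deliberately defined as a maximum over \emph{all} orders $i=0,\dots,m$, not just over $u_k$ itself, precisely so that this step would work --- still tends to infinity, and the same rescaled limit $v$ and Liouville-type contradiction apply word for word. You instead treat Theorem \ref{main theorem} as a black box and propagate its $L^\infty$ bound through the decomposed system: $\|u\|_{L^\infty}\le C$ gives $\|u^p\|_{L^\infty}\le C^p$, hence a bound on $v_m=(-\Delta)^m u$ from the fractional Dirichlet problem on $\Omega$, and then iteratively on $v_{m-1},\dots,v_1$ from the classical Poisson problems. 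Your route is shorter, more elementary (no blow-up machinery re-invoked), and makes transparent that the statement really is a corollary of Theorem \ref{main theorem}; the paper's route instead highlights that the scaling parameter was tuned so the corollary drops out of the same proof. One small polish: rather than invoking the Green-function representation and its kernel estimates, you can appeal to the maximum principle directly by comparing $v_i$ with $\|v_{i+1}\|_{L^\infty}\,\phi_i$, where $\phi_i$ solves the corresponding torsion problem ($(-\Delta)^{\alpha/2}\phi_m=1$ in $\Omega$, $\phi_m=0$ on $\Omega^c$, and $-\Delta\phi_i=1$ in $\Omega$, $\phi_i=0$ on $\partial\Omega$ for $i<m$); this sidesteps any fuss about whether the pointwise Green representation is available in the exact regularity class at hand, which is the one spot where your write-up leans on a step that would itself need a reference.
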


Similar to (\ref{in-1}), Chen and Wu \cite{chen2021uniform} studied the following higher critical
order fractional equation:
\begin{equation*}
\left\{
    \begin{aligned}
  &(-\Delta)^{\frac{n}{2}} u(x)=u^p(x),\quad x\in \Omega,\quad\Omega\mbox{ bounded domain},\\
  &(-\Delta)^i u= 0,\quad x\in \Omega^c,\quad i=0,1,...,\tfrac{n-1}{2} .
\end{aligned}
\right.
\end{equation*}
 \cite{chen2021uniform} yielded a uniform $L^p$ estimate, which states that there exists $p_0>1$, such that for any $1<p_0\leq p<\infty$,
\begin{equation*}
    \int_\Omega u^p(x)dx <C(p_0,n,\Omega).
\end{equation*}
In this paper, we consider more general equation by providing $L^\infty$ estimates under weaker boundary conditions. As such, our study provides a more complete understanding of the behavior of the solution in different regimes of $p$.

For the last few years, there are many researches about a priori estimates for Laplacian equation.
% Brezis and Turner (see \cite{brezis1977class}) first focused on elliptic equation
% \begin{equation}\label{general elliptic eq}
%     \left\{
%     \begin{aligned}
%        L u= g(x,u,Du)\;\;\mbox{in }\Omega\\
%         u=0\;\;\mbox{on }\partial\Omega,
%     \end{aligned}
%     \right.
% \end{equation}
% where $L$ is a linear elliptic operator and $g$ has some growth conditions, and proved
% \begin{equation}
%     \|u\|_{L^\infty(\Omega)}\leq C.
% \end{equation}
% Then they applied a priori estimate to get the existence results of (\ref{general elliptic eq}).
In \cite{gidas1981priori}, Gidas and Spruk used the blowing-up argument and gave a priori estimate on the boundary problem
\begin{equation}
    \left\{
    \begin{aligned}
       \partial_j(a^{ij}\partial_i u)+ b_j\partial_j u + f(x,u)=0\;\;\;\;\mbox{in }\Omega\\
        u=\phi\;\;\mbox{on }\partial\Omega
    \end{aligned}
    \right.
\end{equation}
with the growth condition
\begin{equation}\label{growth condition}
    \lim_{t\to+\infty} \frac{f(x,t)}{t^p} =h(x)\;\;\;\;\mbox{uniformly for }x\in\Bar{\Omega}.
\end{equation}
Here $1<p<\frac{n+2}{n-2}$, $h(x)$ is continuous and strictly positive in $\Bar{\Omega}$, and $\Omega$ is a bounded domain with $C^1$-boundary. One may notice that condition (\ref{growth condition}) is satisfied, for example, if $f(x,u)=u^p$.
% Using a priori estimates $|u(x)|\leq C$, they also gave the non-existence of positive solution of
% \begin{equation}
%     -\Delta u=u^p\;\;\;\;\mbox{ in }\mathbb{R}^n.
% \end{equation}
Later in \cite{de2013priori}, de Figueiredo, Lions and Nussbaum gave a priori estimates on similinear linear equation
\begin{equation}
    \left\{
    \begin{aligned}
        -\Delta u= f(u)\;\;\mbox{in }\Omega\\
        u=0\;\;\mbox{on }\partial\Omega,
    \end{aligned}
    \right.
\end{equation}
with $f$ satisfies
\begin{equation}
    \lim_{t\to+\infty}\frac{f(t)}{t^{(n+2)/(n-2)}}=0
\end{equation}
and some other technical assumptions. Here $\Omega$ is a bounded domain with $C^{2,\alpha}$-boundary. Then they obtained the existence results. But if $f(t)$ grows faster than $t^{(n+2)/(n-2)}$, then a priori estimates may fail, see \cite{joseph1973quasilinear}.

Recently, some results on fractional Laplacian, including the paper \cite{chen2016direct} we follow, are obtained. In \cite{chen2016direct} a priori estimates and topological degree arguments are used to derive existence of solutions of (\ref{fractional PDE}). In \cite{zhang2023priori}, Zhang and Cheng improved the results by giving a priori estimates near the boundary of positive solutions of
\begin{equation}
    \left\{
    \begin{aligned}
        (-\Delta)^\frac{\alpha}{2} u&= u^{(n+\alpha)/(n-\alpha)}\;\;\mbox{in }\Omega\\
        u&=0\;\;\mbox{on }\Omega^c.
    \end{aligned}
    \right.
\end{equation}
Explicitly, they showed that there exists $\delta>0$ and constant $C$ (independent of $u$) such that $\|u\|_{L^\infty(\Omega_\delta)}\leq C$ with $\Omega_\delta=\{x\mid dist(x,\partial\Omega)\leq\delta\}$. 
% In \cite{li2021pointwise} 

When proving Theorem \ref{main theorem}, %\textcolor{red}{after blowing up a sequence of solutions},  
the maxima of a sequence of solutions
may blow up on various locations of the rescaling domain, and hence we need to apply the Liouville theorems of higher-order fractional Laplacians both in the whole space and on a half space to derive contradictions. The Liouville theorem for the whole space is from the previous work \cite{xu2022super}. However, currently, there are no available results for the Liouville theorem on a half space under the locally bounded assumption. Although in \cite{zhuo2019liouville} (Theorem 1.1) a Liouville theorem was established on a half space, it requires the condition that the solution is globally integrable, which cannot be satisfied by the solution after blowing-up. Therefore, in this paper, we first give the Liouville theorem of the corresponding integral equation on a half space, and then we combine it with an equivalence result to get the Liouville theorems of higher-order fractional Laplacians on a half space. First we present the following theorem.
\begin{theorem}\label{moving planes}
   Assume $m$ is an integer, $0<\alpha<2$, $2s=2m+\alpha<n$, $1<p\leq\frac{n+\alpha}{n-2m}$. If $u(x)$ is a locally bounded positive solution for 
\begin{equation}\label{equiv integral equation}
    u(x)=\int_{\mathbb{R}^n_+}G_{2s}^+(x,y)u^p(y)dy.
\end{equation}
    $u$ must be trivial.
\end{theorem}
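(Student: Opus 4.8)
The plan is to run the method of moving planes on the integral equation~\eqref{equiv integral equation}, exploiting the explicit structure of the half-space Green function $G_{2s}^+(x,y)$ of $(-\Delta)^s$ on $\mathbb{R}^n_+$. First I would record the key properties of $G_{2s}^+$: it is positive on $\mathbb{R}^n_+\times\mathbb{R}^n_+$, vanishes to the appropriate order on $\partial\mathbb{R}^n_+$, and—crucially—satisfies a reflection monotonicity inequality. Writing $T_\lambda=\{x_1=\lambda\}$ for planes perpendicular to the $x_1$-axis (one may alternatively move planes orthogonal to a direction parallel to the boundary, which is the version that actually yields triviality; I will use both families), $x^\lambda$ for the reflection of $x$ across $T_\lambda$, and $\Sigma_\lambda$ for the half of $\mathbb{R}^n_+$ on one side, the basic comparison is
\begin{equation}\label{Gcompare}
G_{2s}^+(x^\lambda,y^\lambda)>\max\bigl\{G_{2s}^+(x^\lambda,y),\,G_{2s}^+(x,y^\lambda)\bigr\}\quad\text{and}\quad G_{2s}^+(x,y)>G_{2s}^+(x^\lambda,y)
\end{equation}
for $x,y\in\Sigma_\lambda$ with $x_1,y_1$ on the appropriate side. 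Here the explicit formula for $G_{2s}^+$ (a difference of a Riesz-type kernel and its reflection, up to a boundary factor) is what makes~\eqref{Gcompare} a genuine computation rather than an assumption.

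Next I set $w_\lambda(x)=u(x^\lambda)-u(x)$ and split $\mathbb{R}^n_+$ in the integral representation into $\Sigma_\lambda$ and its complement, applying the change of variables $y\mapsto y^\lambda$ on the complementary piece. Using~\eqref{Gcompare} and $1<p$ (so that $u^p(x^\lambda)-u^p(x)$ has the same sign as $w_\lambda$ with a mean-value factor bounded by $p\,\max\{u(x),u(x^\lambda)\}^{p-1}$), one obtains the fundamental inequality
\begin{equation}\label{key-mp-ineq}
w_\lambda(x)\le C\int_{\Sigma_\lambda^-}G_{2s}^+(x,y)\,\bigl(u(y^\lambda)\bigr)^{p-1}\,w_\lambda(y)\intd y\quad\text{on }\Sigma_\lambda^-:=\{x\in\Sigma_\lambda:w_\lambda(x)<0\},
\end{equation}
i.e.\ a self-improving estimate whose kernel is integrable because $u$ is only \emph{locally} bounded but decays at infinity along the moving-plane direction once we are far enough into the half space. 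To \emph{start} the moving planes I would either move planes from $x_1=+\infty$ (for the direction $e_1$ transverse to the boundary, using that $G_{2s}^+(x,y)$ is monotone increasing in $x_1$ for fixed $y$, which forces $w_\lambda\ge0$ for $\lambda$ large and then lets $\lambda$ decrease) or, for directions $e_j$ parallel to $\partial\mathbb{R}^n_+$, perform a Kelvin transform $\bar u(x)=|x|^{-(n-2s)}u(x/|x|^2)$ centered at a boundary point to gain integrability and decay at infinity, then move planes from $-\infty$. In either case~\eqref{key-mp-ineq} combined with a Hardy–Littlewood–Sobolev / $L^q$ estimate on $\Sigma_\lambda^-$ shows that if $\|u\|_{L^{q}(\Sigma_\lambda^-)}$ (or the corresponding norm for $\bar u$) is small—true for extreme $\lambda$—then $\Sigma_\lambda^-$ has measure zero, so $w_\lambda\ge0$; this is the \emph{initiation} step.

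Then comes the \emph{continuation}: let $\lambda_0$ be the supremum (resp.\ infimum) of the $\lambda$ for which $w_\lambda\ge0$ on $\Sigma_\lambda$, and show $\lambda_0$ cannot be an interior value by the usual dichotomy—either $w_{\lambda_0}\equiv0$, which via~\eqref{equiv integral equation} and the strict inequality in~\eqref{Gcompare} forces $u\equiv0$ (contradicting positivity), or $w_{\lambda_0}>0$ strictly, in which case a narrow-region / measure argument pushes $\lambda$ slightly past $\lambda_0$, contradicting maximality. For the transverse direction the conclusion is that $u$ is monotone increasing in $x_1$ for all $x_1>0$; for the parallel directions, since the Kelvin-transformed problem has no distinguished center, one gets that $\bar u$ (hence $u$) is symmetric about every plane parallel to $\partial\mathbb{R}^n_+$, which is only possible if $u$ is constant in those variables, and then the integral equation~\eqref{equiv integral equation} with a nontrivial such $u$ is impossible (the right side would be infinite or force $u\equiv0$). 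Assembling these, $u\equiv0$, which is the claim.

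The main obstacle I anticipate is making the moving-plane machinery work under \emph{only} local boundedness of $u$, with no a priori global integrability: the kernel estimates in~\eqref{key-mp-ineq} and the HLS step both nominally want $u\in L^{q}$ globally. The device to get around this is exactly the one the authors advertise—choosing the Kelvin center at a \emph{boundary} point of $\mathbb{R}^n_+$ (so the transformed domain is again the half space) and carefully checking that the transformed solution, while possibly singular at the origin, is integrable near infinity and that the singularity at $0$ does not obstruct starting the planes from the far side; equivalently, one localizes the integral inequality to a half-ball and controls the tail using the decay coming from $G_{2s}^+$ and the subcriticality $p\le\frac{n+\alpha}{n-2m}$. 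Verifying~\eqref{Gcompare} for the specific higher-order half-space kernel $G_{2s}^+$, and confirming that the boundary factor does not spoil the two strict inequalities, is the other technical point that needs the explicit formula rather than soft arguments.
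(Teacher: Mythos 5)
Your overall strategy---moving planes on the integral equation, Kelvin transform centered at a boundary point to compensate for the lack of global integrability, HLS on the bad set, narrow-region continuation---is the right framework and matches the paper in spirit. However there are three concrete places where your outline departs from what is actually needed and where, as written, the argument would not close.

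First, you propose to run moving planes directly on the single kernel $G_{2s}^+$, relying on the reflection inequalities in your display~\eqref{Gcompare} for $G_{2s}^+$ itself. The paper never verifies these for $G_{2s}^+$. Instead it exploits the factorization $G_{2s}^+(x,y)=\int_{\mathbb{R}^n_+}G_{2m}^+(x,z)G_\alpha^+(z,y)\,dz$ and introduces the intermediate function $u_m(x)=\int G_\alpha^+(x,y)u^p(y)\,dy$, so that the single equation becomes a coupled system $u=\int G_{2m}^+ u_m$, $u_m=\int G_\alpha^+ u^p$. The Kelvin-transform identity (Lemma~\ref{kelvin prop of green functions}) and the reflection lemma (Lemma~\ref{sym prop of green functions}) are proved for $G_{2m}^+$ and $G_\alpha^+$ separately, which is elementary, whereas establishing them directly for the composed kernel is not. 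Moreover the two Kelvin transforms $\bar u$ and $\bar u_m$ carry different homogeneity exponents $n-2m$ and $n-\alpha$, producing the two weights $|y|^{-\beta_1}$, $|y|^{-\beta_2}$ in~\eqref{eq 10}, and it is $\beta_1=n+\alpha-p(n-2m)\ge0$ (exactly the hypothesis $p\le\frac{n+\alpha}{n-2m}$) that makes the singularity harmless. This system structure is also what lets the HLS step close: you get a pair of coupled inequalities~\eqref{eq 3}, one factor for each Green function, and the product of the two tail integrals can be made $<\tfrac12$. Working with $G_{2s}^+$ alone you would need a single HLS estimate against $|x-y|^{-(n-2s)}$ with a single weight, which is not what falls out of the Kelvin calculation.

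Second, your ``transverse'' family of planes $\{x_n=\lambda\}$ is not used in the paper, and the property you invoke to start it---that $G_{2s}^+(x,y)$ is monotone increasing in the normal variable for fixed $y$---is false: $G_{2s}^+(x,y)\to0$ as $x_n\to\infty$. The paper moves planes only in directions \emph{parallel} to $\partial\mathbb{R}^n_+$, arrives at symmetry of $\bar u$ about every such plane through the Kelvin center, and then deduces in Step~3 that $u$ and $u_m$ depend only on $x_n$.

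Third, the finish is not ``the right side is infinite or forces $u\equiv0$'' for free. Step~4 of the paper is a nontrivial computation. It uses the pointwise lower bound $G_\alpha^+(x,y)\ge C(n,\alpha)(x_ny_n)^{\alpha/2}|x-y|^{-n}$ and the explicit form of $G_{2m}^+$ to produce, from the system, the chain of lower bounds: first $(y_n)^{\alpha/2}u^p(y_n)\to0$ along a subsequence, then $u_m(x_n)\ge C\,x_n^{\alpha/2-1}$, then either the $y'$-integral already diverges (when $m\ge2$, giving an immediate contradiction) or, when $m=1$, $u(x_n)\ge C\,x_n^{\alpha/2+1}$, which contradicts the decay of $(x_n)^{\alpha/2}u^p(x_n)$. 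Crucially this step again uses the decomposition into $u$ and $u_m$, feeding the lower bound for $u_m$ into the representation of $u$, so it cannot be reproduced if you work with $G_{2s}^+$ alone. Your sketch has no analogue of this computation and asserts the conclusion, so as written there is a genuine gap at the end.

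In short: the Kelvin/moving-planes/HLS scaffolding is correct and the choice of Kelvin center on the boundary is the right device, but you are missing the system decomposition $u\leftrightarrow u_m$ that makes the kernel lemmas provable and the HLS iteration close, you should drop the transverse moving-plane direction (and the false monotonicity claim), and you need the explicit one-dimensional reduction and integral estimates in Step~4 to actually derive the contradiction.
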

Here $G_{2s}^+(x,y)$ represents the Green's function of $(-\Delta)^s$, and its explicit expression will be provided in  Section \ref{sec 2}. Combining Theorem \ref{moving planes} with the equivalence between integral equation (\ref{equiv integral equation}) and differential equation  (\ref{intro eq 2}) (refer to \cite{li2016symmetry}, Theorem 4.1), we have
\begin{corollary}\label{lm}
Assume $m$ is an integer, $0<\alpha<2$, $2s=2m+\alpha<n$, $1<p\leq\frac{n+\alpha}{n-2m}$, $u\in C_{loc}^{2m+[\alpha],\{\alpha\}+\epsilon}(\mathbb{R}^n_+)$ is a positive solution of the following equation
\begin{equation}\label{intro eq 2}
\left\{
\begin{aligned}
    &(-\Delta)^\frac{\alpha}{2}(-\Delta)^m u(x)=u^p(x),\quad x\in \mathbb{R}^n_+,\\
    &(-\Delta)^i u=0\mbox{ on }\partial \mathbb{R}^n_+\mbox{ for }i=0,1,...,m-1,\\
    &(-\Delta)^m u=0\mbox{ in } (\mathbb{R}^n_+)^c.
    \end{aligned}
    \right.
\end{equation}
If $u$ is locally bounded and $(-\Delta)^i u\geq 0$ in $\mathbb{R}^n_+$ for $i=1,2,...,m-1$, then $u$ can only be trivial, i.e. $u\equiv 0$.    
\end{corollary}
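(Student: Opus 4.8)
The goal is Corollary~\ref{lm}: a Liouville theorem for the differential system \eqref{intro eq 2} on the half space, deduced from the integral-equation version Theorem~\ref{moving planes}. The plan is to reduce the PDE system to the single integral equation \eqref{equiv integral equation} and then quote Theorem~\ref{moving planes}.

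\textbf{Step 1: Pass from the system to a single higher-order equation.} Given a positive solution $u$ of \eqref{intro eq 2} with $(-\Delta)^i u \ge 0$ in $\mathbb{R}^n_+$ for $i=1,\dots,m-1$, I first observe that the system is exactly the decomposition of the scalar equation $(-\Delta)^s u = u^p$ in $\mathbb{R}^n_+$ with $2s = 2m+\alpha$, together with the Navier-type boundary conditions $(-\Delta)^i u = 0$ on $\partial\mathbb{R}^n_+$ for $i = 0,\dots,m-1$ and the exterior condition $(-\Delta)^m u = 0$ in $(\mathbb{R}^n_+)^c$. So $u$ solves
\begin{equation*}
\left\{
\begin{aligned}
&(-\Delta)^s u(x) = u^p(x), \quad x \in \mathbb{R}^n_+,\\
&(-\Delta)^i u = 0 \text{ on } \partial\mathbb{R}^n_+, \quad i = 0,1,\dots,m-1,\\
&(-\Delta)^m u = 0 \text{ in } (\mathbb{R}^n_+)^c.
\end{aligned}
\right.
\end{equation*}
The regularity hypothesis $u \in C_{loc}^{2m+[\alpha],\{\alpha\}+\epsilon}(\mathbb{R}^n_+)$ is precisely what is needed for the fractional operator $(-\Delta)^{\alpha/2}$ applied to $(-\Delta)^m u$ to make pointwise sense, and the sign conditions $(-\Delta)^i u \ge 0$ ($i = 1,\dots,m-1$), together with $(-\Delta)^m u \ge 0$ (which follows from $(-\Delta)^s u = u^p > 0$ and the maximum principle, or can be assumed), ensure that no superharmonic obstruction destroys the representation formula.

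\textbf{Step 2: Invoke the equivalence between the differential and integral formulations.} This is the heart of the argument and I would cite Theorem~4.1 of \cite{li2016symmetry}: under the stated regularity and nonnegativity assumptions, $u$ is a positive solution of the PDE system above \emph{if and only if} $u$ satisfies the integral equation
\begin{equation*}
u(x) = \int_{\mathbb{R}^n_+} G_{2s}^+(x,y)\, u^p(y)\, dy,
\end{equation*}
where $G_{2s}^+$ is the Green's function for $(-\Delta)^s$ on $\mathbb{R}^n_+$ with the corresponding boundary data. Concretely, one applies the Green's representation iteratively: the exterior condition $(-\Delta)^m u = 0$ in $(\mathbb{R}^n_+)^c$ lets one write $(-\Delta)^m u$ via the fractional ($\alpha/2$-order) Green's function against $u^p$; then each successive Navier condition $(-\Delta)^i u = 0$ on $\partial\mathbb{R}^n_+$ lets one invert one more Laplacian with the classical Green's function of the half space, and the composition of these $m$ classical kernels with the one fractional kernel is $G_{2s}^+(x,y)$. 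The nonnegativity $(-\Delta)^i u \ge 0$ is used to guarantee that the potentials produced at each stage are finite (no positive harmonic correction can be discarded), which is why these sign conditions appear in the hypotheses. Since $u$ is locally bounded and positive, the resulting integral solution is a locally bounded positive solution of \eqref{equiv integral equation}.

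\textbf{Step 3: Apply Theorem~\ref{moving planes} and conclude.} Now $u$ is a locally bounded positive solution of \eqref{equiv integral equation} with $1 < p \le \frac{n+\alpha}{n-2m}$ and $2s = 2m+\alpha < n$ — exactly the hypotheses of Theorem~\ref{moving planes}. That theorem forces $u$ to be trivial, contradicting positivity unless $u \equiv 0$. Hence $u \equiv 0$, which is the claim.

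\textbf{Main obstacle.} The only genuinely nontrivial point is Step~2: verifying that the hypotheses of the equivalence theorem in \cite{li2016symmetry} are met by a solution produced from the blowing-up procedure, in particular that $(-\Delta)^m u \in \mathcal{L}_\alpha(\mathbb{R}^n)$ (or whatever integrability \cite{li2016symmetry} requires) and that the iterated Green's potentials converge. The locally-bounded-but-not-globally-integrable nature of blown-up solutions is exactly the difficulty flagged in the introduction, and it is handled on the integral side inside Theorem~\ref{moving planes} via a Kelvin transform; here on the PDE side one only needs the equivalence to go through for \emph{some} finite positive solution, which the cited theorem supplies under the nonnegativity assumptions $(-\Delta)^i u \ge 0$. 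Everything else is a direct citation chain.
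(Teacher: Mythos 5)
Your proposal is correct and follows essentially the same route as the paper: convert the PDE system \eqref{intro eq 2} to the integral equation \eqref{equiv integral equation} by citing Theorem 4.1 of \cite{li2016symmetry}, then apply Theorem~\ref{moving planes}. The paper's own proof is shorter (just the two citations, with a one-line remark that the boundary conditions in \eqref{intro eq 2} suffice even though \cite{li2016symmetry} states stronger ones), but the logical content matches your Steps 2--3; your Step 1 and the discussion of where the sign conditions enter are harmless elaboration of the same argument.
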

% \begin{equation}\label{PDE in equiv thm}
% \left\{
% \begin{aligned}
%     &(-\Delta)^\frac{\alpha}{2}(-\Delta)^m u(x)=u^p(x),\quad x\in \mathbb{R}^n_+,\\
%     &(-\Delta)^i u=0\mbox{ on }\partial \mathbb{R}^n_+\mbox{ for }i=0,1,...,m-1,\\
%     &(-\Delta)^m u=0\mbox{ in } (\mathbb{R}^n_+)^c.
%     \end{aligned}
%     \right.
% \end{equation}
\begin{remark}
    It is important to note that the upper bound of $p$ is chosen as $\frac{n+\alpha}{n-2m}$, which is smaller than the critical index $\frac{n+2m+\alpha}{n-2m-\alpha}$. This restriction on $p$ is necessary because, to establish Theorem \ref{moving planes}, we employ the Kelvin transform in the moving plane method to handle the issue that $u$ lacks global integrability. Ensuring the positivity of the index for the singularity (see (\ref{eq 10})) is essential in this approach. In our future work, we hope to reach the critical index for $p$ or even remove the upper bound for $p$.
\end{remark}

It is well known that the Liouville type theorems are crucial in proving a priori estimates of the solutions. The Liouville type theorems
for various PDE and IE systems have been extensively studied by many authors. For higher-order Laplacians on a half space, refer to \cite{cao2013liouville}; for fractional Laplacians on a half space, refer to \cite{chen2015liouville}; for higher-order fractional Laplacians on a half space, refer to \cite{zhuo2019liouville}; and for systems of fractional Laplacians on a half space, refer to \cite{dai2017liouville}. For various Liouville type theorems, see \cite{cheng2016liouville,fang2012liouville,cao2021super,cao2021liouville}.

This paper is structured as follows: In Section \ref{sec 2}, we will prove Theorem \ref{moving planes} and Corollary \ref{lm}. In Section \ref{sec 3}, we will prove Theorem \ref{main theorem} and Corollary \ref{corollary}.

\section{The Liouville theorem of higher-order fractional Laplacians on a half space}\label{sec 2}

Denote 
\begin{equation*}
    G_{2s}^+(x,y)=\int_{\mathbb{R}^n_+}G_{2m}^+(x,z)G_\alpha^+(z,y)dz,
    \end{equation*}
where $G_{2m}^+(x,y)$ as the Green's function of $(-\Delta)^{m}$ in $\mathbb{R}^n_+$, and $G_{\alpha}^+(x,y)$ as the Green's function of $(-\Delta)^{\frac{\alpha}{2}}$ in $\mathbb{R}^n_+$, i.e.
\begin{equation*}
    G_{2m}^+(x,y)=\begin{cases}
    \frac{1}{|x-y|^{n-2m}}-\frac{1}{|x^*-y|^{n-2m}},\;\;\;\;&x,y\in\mathbb{R}^n_+,\;x\neq y,\\
    0,\;\;\;\;&\mbox{otherwise},
    \end{cases}
\end{equation*}
where $x^*$ is the reflection of $x$ about $\partial\mathbb{R}^n_+$, and 
\begin{equation*}
    G_\alpha^+(x,y)=\begin{cases}
        \frac{C_n}{|x-y|^{n-\alpha}}\int_0^\frac{4x_ny_n}{|x-y|^2} \frac{z^{\alpha/2-1}}{(z+1)^\frac{n}{2}}dz,\;\;\;\;&x,y\in\mathbb{R}^n_+,\;x\neq y,\\
         0,\;\;\;\;&\mbox{otherwise}.
    \end{cases}
\end{equation*}
One may verify the following lemma by the direct calculation.
\begin{lemma}\label{kelvin prop of green functions}
    For $x,y\in\mathbb{R}^n_+,\;x\neq y$, $z^0\in\mathbb{R}^n$, we have
    \begin{equation*}
        \begin{aligned}
            \frac{G_{2m}^+(\frac{x-z^0}{|x-z^0|^2},\frac{y-z^0}{|y-z^0|^2})}{|x-z^0|^{n-2m}|y-z^0|^{n-2m}}=G_{2m}^+(x-z^0,y-z^0),\\
            \frac{G_{\alpha}^+(\frac{x-z^0}{|x-z^0|^2},\frac{y-z^0}{|y-z^0|^2})}{|x-z^0|^{n-\alpha}|y-z^0|^{n-\alpha}}=G_{\alpha}^+(x-z^0,y-z^0).
        \end{aligned}
    \end{equation*}
\end{lemma}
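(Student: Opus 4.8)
The plan is to derive both identities from the classical conformal (Kelvin) identity for the inversion $w\mapsto \frac{w-z^0}{|w-z^0|^2}$. Write $a=x-z^0$ and $b=y-z^0$, and assume (as is implicitly needed) that $z^0\notin\{x,y\}$, so that the inversion is defined at $x$ and $y$. I would record the two elementary facts
\begin{equation*}
\left|\frac{a}{|a|^2}-\frac{b}{|b|^2}\right|=\frac{|a-b|}{|a|\,|b|}=\frac{|x-y|}{|x-z^0|\,|y-z^0|},\qquad \left(\frac{a}{|a|^2}\right)_n\!\left(\frac{b}{|b|^2}\right)_n=\frac{a_nb_n}{|a|^2|b|^2},
\end{equation*}
the first being the standard distance formula for inversion and the second being immediate. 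Moreover, if $x_n\le z^0_n$ or $y_n\le z^0_n$, then in each of the four Green's functions appearing in the statement at least one argument lies outside $\mathbb{R}^n_+$, so every term is zero and the identities hold trivially; hence it suffices to treat the case $a,b\in\mathbb{R}^n_+$ (equivalently $\frac{a}{|a|^2},\frac{b}{|b|^2}\in\mathbb{R}^n_+$).

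For $G_{2m}^+$ the first move is to remove the explicit reflection: since $\xi^*$ merely flips the sign of $\xi_n$, one has $|\xi^*-\eta|^2=|\xi-\eta|^2+4\xi_n\eta_n$ for all $\xi,\eta$, so for $\xi,\eta\in\mathbb{R}^n_+$
\begin{equation*}
G_{2m}^+(\xi,\eta)=\frac{1}{|\xi-\eta|^{n-2m}}-\frac{1}{\bigl(|\xi-\eta|^2+4\xi_n\eta_n\bigr)^{(n-2m)/2}}.
\end{equation*}
Substituting $\xi=a/|a|^2$, $\eta=b/|b|^2$ and inserting the two scaling relations above, every quantity raised to the power $\tfrac{n-2m}{2}$ picks up the common factor $\tfrac{1}{|a|^2|b|^2}$; this factor pulls out as $|a|^{n-2m}|b|^{n-2m}$ and leaves exactly $G_{2m}^+(a,b)$ behind. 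Dividing through by $|a|^{n-2m}|b|^{n-2m}=|x-z^0|^{n-2m}|y-z^0|^{n-2m}$ then gives the first identity.

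For $G_\alpha^+$ the same substitution turns the prefactor $C_n/|\xi-\eta|^{n-\alpha}$ into $|a|^{n-\alpha}|b|^{n-\alpha}\,C_n/|a-b|^{n-\alpha}$, and the only extra point is that the upper limit of integration is itself conformally invariant, $\frac{4\xi_n\eta_n}{|\xi-\eta|^2}=\frac{4a_nb_n}{|a-b|^2}$, so the integral $\int_0^{t}\frac{z^{\alpha/2-1}}{(z+1)^{n/2}}\,dz$ (with $t$ the corresponding upper limit) is unchanged; collecting the factors yields the second identity. There is no real obstacle here — the statement is a direct computation — but the one point worth flagging is the reflection term of $G_{2m}^+$ (equivalently the $4\xi_n\eta_n$ term inside $G_\alpha^+$): one should note that the reflection of an inverted point is the inverted point of the reflection, because the inversion commutes with the linear isometry $R$ that reflects across $\partial\mathbb{R}^n_+$; this is exactly what makes the single identity $|\xi^*-\eta|^2=|\xi-\eta|^2+4\xi_n\eta_n$ carry the whole argument uniformly, without having to track $z^0$ and its mirror image $(z^0)^*$ separately.
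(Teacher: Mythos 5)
Your proof is correct and carries out exactly the direct calculation the paper invokes (it states the lemma without proof, remarking only that it follows ``by direct calculation''). The reduction to the two elementary inversion identities $\bigl|\tfrac{a}{|a|^2}-\tfrac{b}{|b|^2}\bigr|=\tfrac{|a-b|}{|a||b|}$ and $\bigl(\tfrac{a}{|a|^2}\bigr)_n\bigl(\tfrac{b}{|b|^2}\bigr)_n=\tfrac{a_nb_n}{|a|^2|b|^2}$, combined with the observation $|\xi^*-\eta|^2=|\xi-\eta|^2+4\xi_n\eta_n$ so that both Green's functions depend only on $|\xi-\eta|$ and $\xi_n\eta_n$, and the upfront disposal of the degenerate case $x_n\le z^0_n$ or $y_n\le z^0_n$ where every term vanishes, is precisely the intended verification.
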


\begin{lemma}[Hardy-Littlewood-Sobolev inequality in general domains]\label{HLS lemma}
    Assume $0<\alpha<n$, $A,B\subset \mathbb{R}^n.$ Let $g\in L^{\frac{nq}{n+\alpha q}}(A)$ for $\frac{n}{n-\alpha}<q<\infty$. Then
    \begin{equation*}
        \| \int_A \frac{g(y)}{|x-y|^{n-\alpha}}dy\|_{L^q(B)} \leq C(n,q,\alpha) \|g\|_{L^\frac{nq}{n+\alpha q}(A)}.
    \end{equation*}
\end{lemma}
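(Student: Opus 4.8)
\medskip
\noindent\emph{Proof idea.} This is the classical Hardy--Littlewood--Sobolev (HLS) inequality with $\mathbb{R}^n$ replaced by arbitrary subsets $A$ (for the input) and $B$ (for the output), so the plan is to reduce it to the classical statement by zero-extension and restriction. Given $g\in L^{r}(A)$ with $r:=\frac{nq}{n+\alpha q}$, extend $g$ by $0$ outside $A$ to a function $\tilde g$ on $\mathbb{R}^n$; then $\tilde g\in L^{r}(\mathbb{R}^n)$ with $\|\tilde g\|_{L^{r}(\mathbb{R}^n)}=\|g\|_{L^{r}(A)}$, and for every $x\in\mathbb{R}^n$
\begin{equation*}
\int_A\frac{g(y)}{|x-y|^{n-\alpha}}\,dy=\int_{\mathbb{R}^n}\frac{\tilde g(y)}{|x-y|^{n-\alpha}}\,dy=(I_\alpha\tilde g)(x),
\end{equation*}
the Riesz potential of $\tilde g$. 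The exponents obey $\frac1r=\frac1q+\frac{\alpha}{n}$, and the hypothesis $\frac{n}{n-\alpha}<q<\infty$ is exactly the admissible range $1<r<\frac{n}{\alpha}$ (with $r<q$ automatic) for the classical inequality $\|I_\alpha f\|_{L^q(\mathbb{R}^n)}\le C(n,q,\alpha)\|f\|_{L^{r}(\mathbb{R}^n)}$. Applying it to $f=\tilde g$ and then using $\|\,\cdot\,\|_{L^q(B)}\le\|\,\cdot\,\|_{L^q(\mathbb{R}^n)}$ for $B\subseteq\mathbb{R}^n$ gives
\begin{equation*}
\Bigl\|\int_A\frac{g(y)}{|x-y|^{n-\alpha}}\,dy\Bigr\|_{L^q(B)}\le\|I_\alpha\tilde g\|_{L^q(\mathbb{R}^n)}\le C(n,q,\alpha)\|g\|_{L^{r}(A)},
\end{equation*}
which is the assertion; note the constant is the one from the classical inequality and is independent of $A$ and $B$, which is precisely what makes this form useful when $A,B$ range over a family of subdomains (as in the moving-plane argument of Theorem~\ref{moving planes}).

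\medskip
\noindent I do not expect a genuine obstacle here: the only thing to watch is the exponent bookkeeping displayed above. If one prefers not to quote the Riesz-potential mapping property directly, an equivalent route is to start from the bilinear form $\Bigl|\int_A\int_B\frac{f(x)h(y)}{|x-y|^{n-\alpha}}\,dx\,dy\Bigr|\le C(n,q,\alpha)\|f\|_{L^{q'}(B)}\|h\|_{L^{r}(A)}$ (obtained from the classical bilinear HLS by the same zero-extension) and dualize in $f$, which produces the stated estimate.
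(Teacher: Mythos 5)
Your proof is correct and follows exactly the paper's own argument: extend $g$ by zero to $\mathbb{R}^n$ (the paper writes this as $g\chi_A$), apply the classical HLS/Riesz-potential inequality on all of $\mathbb{R}^n$, and then shrink the $L^q$ norm from $\mathbb{R}^n$ to $B$. The exponent bookkeeping you spell out is the same as what the paper uses implicitly.
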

\begin{proof}
    By classical HLS inequality, we have 
    \begin{equation*}
        \| \int_A \frac{g(y)}{|x-y|^{n-\alpha}}dy\|_{L^q(\mathbb{R}^n)} = \| \int_{\mathbb{R}^n} \frac{g(y)\chi_A(y)}{|x-y|^{n-\alpha}}dy \| _{L^q({\mathbb{R}^n})} \leq C(n,q,\alpha) \|g\chi_A\|_{L^\frac{nq}{n+\alpha q}(\mathbb{R}^n)} =C(n,q,\alpha) \|g\|_{L^\frac{nq}{n+\alpha q}(A)}.
    \end{equation*}
    Thus 
    \begin{equation*}
        \| \int_A \frac{g(y)}{|x-y|^{n-\alpha}}dy\|_{L^q(B)} \leq    \| \int_A \frac{g(y)}{|x-y|^{n-\alpha}}dy\|_{L^q(\mathbb{R}^n)} \leq C(n,q,\alpha) \|g\|_{L^\frac{nq}{n+\alpha q}(A)}.
    \end{equation*}
\end{proof}

    We use the method of moving planes to prove that (\ref{equiv integral equation}) admits only trivial solutions. Before proving Theorem \ref{moving planes}, we state some notations and properties. Denote 
\begin{equation*}
    u_m(x)=\int_{\mathbb{R}^n_+}G_\alpha^+(x,y)u^p(y)dy.
\end{equation*}
By definition of $G_{2s}^+(x,y)$, one can see that
\begin{equation}\label{integral u}
    u(x)=\int_{\mathbb{R}^n_+}G_{2m}^+(x,y)u_m(y)dy.
\end{equation}
For any $z^0\in\mathbb{R}^n$, denote
    \begin{equation*}
        \begin{aligned}
            \bar{u}(x)=\frac{1}{|x-z^0|^{n-2m}}u(\frac{x-z^0}{|x-z^0|^2}),\;\;\;\;x\neq z^0,\\
            \bar{u}_m(x)=\frac{1}{|x-z^0|^{n-\alpha}}u_m(\frac{x-z^0}{|x-z^0|^2})\;\;\;\;x\neq z^0.
        \end{aligned}        
    \end{equation*}
By a translation, we can always assume $z^0=0$.
For $\lambda\in\mathbb{R}$, denote
\begin{equation}
    \begin{aligned}
        \Sigma_\lambda=\mathbb{R}^n_+\cap\{x=(x_1,\cdots,x_n)\in\mathbb{R}_n \mid x_1<\lambda\},\\
        x^\lambda=(2\lambda-x_1,x_2,\cdots,x_n), \\
         \Bar{u}^\lambda(x)=\Bar{u}(x^\lambda),\;\;\Bar{u}^\lambda_m(x)=\Bar{u}_m(x^\lambda).
    \end{aligned}
\end{equation}
Denote
\begin{equation*}
    \begin{aligned}
        w_m^\lambda(x)=\Bar{u}_m^\lambda(x)-\Bar{u}_m(x),\\
        w^\lambda(x)=\Bar{u}^\lambda(x)-\Bar{u}(x).
    \end{aligned}
\end{equation*}

The following Lemma is the direct results of symmetry.
\begin{lemma}\label{sym prop of green functions}
     For $x,y\in\mathbb{R}^n_+,\;x\neq y$, we have
     \begin{equation*}
         \begin{aligned}
             G_\alpha^+(x,y)=G_\alpha^+(x^\lambda,y^\lambda),\;G_\alpha^+(x^\lambda,y)=G_\alpha^+(x,y^\lambda),\\
             G_{2m}^+(x,y)=G_{2m}^+(x^\lambda,y^\lambda),\;G_{2m}^+(x^\lambda,y)=G_{2m}^+(x,y^\lambda).
         \end{aligned}
     \end{equation*}
\end{lemma}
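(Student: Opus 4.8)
The plan is to reduce all four identities to two elementary facts about the reflection $x\mapsto x^\lambda=(2\lambda-x_1,x_2,\dots,x_n)$ across the hyperplane $\{x_1=\lambda\}$: first, it is an isometry of $\mathbb{R}^n$, so that $|x^\lambda-y^\lambda|=|x-y|$ and also $|x^\lambda-y|=|x-y^\lambda|$ for all $x,y$; second, it leaves the last coordinate fixed, $(x^\lambda)_n=x_n$, so it maps $\mathbb{R}^n_+$ onto itself and commutes with the reflection $x\mapsto x^\ast$ across $\partial\mathbb{R}^n_+=\{x_n=0\}$, i.e. $(x^\lambda)^\ast=(x^\ast)^\lambda$. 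The cross identity $|x^\lambda-y|=|x-y^\lambda|$ is checked by expanding: both squared norms equal $(2\lambda-x_1-y_1)^2+\sum_{i\geq 2}(x_i-y_i)^2$. The commutation $(x^\lambda)^\ast=(x^\ast)^\lambda$ is immediate because the two reflections act on disjoint coordinates ($x_1$ versus $x_n$, and $n\geq 2$).

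First I would handle $G_\alpha^+$. The point is that $G_\alpha^+(x,y)$ depends on $(x,y)$ only through $|x-y|$ and the product $x_ny_n$ (on the set $x,y\in\mathbb{R}^n_+$ where the formula is nonzero, which $\lambda$-reflection preserves). Since $(x^\lambda)_n(y^\lambda)_n=x_ny_n$ and $|x^\lambda-y^\lambda|=|x-y|$, the upper limit $\tfrac{4x_ny_n}{|x-y|^2}$ of the defining integral is invariant under $(x,y)\mapsto(x^\lambda,y^\lambda)$, giving $G_\alpha^+(x,y)=G_\alpha^+(x^\lambda,y^\lambda)$. Reflecting only the first argument uses $(x^\lambda)_n\,y_n=x_n\,(y^\lambda)_n$ together with $|x^\lambda-y|=|x-y^\lambda|$, which yields $G_\alpha^+(x^\lambda,y)=G_\alpha^+(x,y^\lambda)$.

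Next I would handle $G_{2m}^+$, where $G_{2m}^+(x,y)=|x-y|^{-(n-2m)}-|x^\ast-y|^{-(n-2m)}$ for $x,y\in\mathbb{R}^n_+$. Applying $\lambda$-reflection to both arguments, $|x^\lambda-y^\lambda|=|x-y|$ and $|(x^\lambda)^\ast-y^\lambda|=|(x^\ast)^\lambda-y^\lambda|=|x^\ast-y|$ by the commutation and the isometry property; since $\lambda$-reflection preserves $\mathbb{R}^n_+$ and the condition $x\neq y$, we obtain $G_{2m}^+(x^\lambda,y^\lambda)=G_{2m}^+(x,y)$. For the mixed identity, $|x^\lambda-y|=|x-y^\lambda|$ and $|(x^\lambda)^\ast-y|=|(x^\ast)^\lambda-y|=|x^\ast-y^\lambda|$ give $G_{2m}^+(x^\lambda,y)=G_{2m}^+(x,y^\lambda)$. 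I do not expect any real obstacle here; the only thing requiring care is the bookkeeping of the two commuting reflections and the invariance of $x_n$ (hence of $\mathbb{R}^n_+$ and of the product $x_ny_n$) under the $\lambda$-reflection — once these are recorded, all four equalities follow by direct inspection of the closed-form expressions for $G_\alpha^+$ and $G_{2m}^+$.
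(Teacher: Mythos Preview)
Your proposal is correct and is precisely the direct symmetry verification the paper alludes to; the paper itself gives no proof beyond the sentence ``The following Lemma is the direct results of symmetry,'' and your write-up simply supplies the routine bookkeeping (isometry of the $\lambda$-reflection, invariance of $x_n$, and commutation with $x\mapsto x^*$) that makes this remark rigorous.
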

\begin{proof}[\textbf{Proof of Theorem \ref{moving planes}}]
       There are four steps of the proof.

\emph {Step 1.} Suppose $u$ is not trivial. For $\lambda$ sufficiently negative, we will show that $w_m^\lambda\geq 0,\; w^\lambda\geq 0$ a.e in $\Sigma_\lambda$. This provides a starting point to move the plane $\{x\in\mathbb{R}^n_+\mid x_1=\lambda\}$.

By (\ref{integral u}),
\begin{equation*}
    \begin{aligned}
        \Bar{u}(x)=&\frac{1}{|x|^{n-2m}}\int_{\mathbb{R}^n_+}G_{2m}^+(\frac{x}{|x|^2},y)u_m(y)dy\\
        =&\frac{1}{|x|^{n-2m}}\int_{\mathbb{R}^n_+}G_{2m}^+(\frac{x}{|x|^2},\frac{z}{|z|^2})u_m(\frac{z}{|z|^2})\frac{dz}{|z|^{2n}}\\
        =&\int_{\mathbb{R}^n_+} \frac{ G_{2m}^+(\frac{x}{|x|^2},\frac{z}{|z|^2})}{|x|^{n-2m}|z|^{n-2m}}
       u_m(\frac{z}{|z|^2})\frac{dz}{|z|^{n+2m}}\\
    \overset{Lemma\;\ref{kelvin prop of green functions}}{=} &\int_{\mathbb{R}^n_+}G_{2m}^+(x,z)\frac{u_m(\frac{z}{|z|^2})}{|z|^{n-\alpha}}\frac{dz}{|z|^{2m+\alpha}}\\
    =&\int_{\mathbb{R}^n_+}G_{2m}^+(x,z)\frac{\Bar{u}_m(z)}{|z|^{\beta_2}}dz,\;\;\;\;\beta_2=2m+\alpha>0.
    \end{aligned}
\end{equation*}
Similarly, we have
\begin{equation}\label{eq 10}
    \Bar{u}_m(x)=\int_{\mathbb{R}^n_+}G_\alpha^+(x,z)\frac{\bar{u}^p(z)}{|z|^{\beta_1}}dz,\;\;\;\; \beta_1=n+\alpha-p(n-2m)\geq0.
\end{equation}

Using Lemma \ref{sym prop of green functions}, we get
\begin{equation*}
\begin{aligned}
       \Bar{u}_m(x)=\int_{\Sigma_\lambda}G_\alpha^+(x,y)\frac{\bar{u}^p(y)}{|y|^{\beta_1}}dy + \int_{\Sigma_\lambda}G_\alpha^+(x,y^\lambda)\frac{(\bar{u}^\lambda(y))^p}{|y^\lambda|^{\beta_1}}dy ,\\
       \Bar{u}_m^\lambda(x)=\int_{\Sigma_\lambda}G_\alpha^+(x^\lambda,y)\frac{\bar{u}^p(y)}{|y|^{\beta_1}}dy + \int_{\Sigma_\lambda}G_\alpha^+(x,y)\frac{(\bar{u}^\lambda(y))^p}{|y^\lambda|^{\beta_1}}dy.
\end{aligned}
\end{equation*}
So
\begin{equation}\label{eq 1}
    \Bar{u}_m(x)-\Bar{u}_m^\lambda(x)=\int_{\Sigma_\lambda} \left(G_\alpha^+(x,y)-G_\alpha^+(x^\lambda,y)\right)\left(\frac{\bar{u}^p(y)}{|y|^{\beta_1}}-\frac{(\bar{u}^\lambda(y))^p}{|y^\lambda|^{\beta_1}} \right)dy.
\end{equation}
Similarly,
\begin{equation}\label{eq 1-1}
        \Bar{u}(x)-\Bar{u}^\lambda(x)=\int_{\Sigma_\lambda} \left(G_{2m}^+(x,y)-G_{2m}^+(x^\lambda,y)\right) \left(\frac{\bar{u}_m(y)}{|y|^{\beta_2}}-\frac{\bar{u}^\lambda_m(y)}{|y^\lambda|^{\beta_2}} \right)dy.
\end{equation}

Define 
\begin{equation*}
    \begin{aligned}
        \Sigma_\lambda^- =\{ x\in\Sigma_\lambda\backslash B_\xi(0^\lambda)\mid w^\lambda<0 \},\\
        \Sigma_{m,\lambda}^- =\{ x\in\Sigma_\lambda\backslash B_\xi(0^\lambda)\mid w_m^\lambda<0 \}.
    \end{aligned}
\end{equation*}

Here $\xi>0$ is sufficiently small, $0^\lambda=(2\lambda,0,\cdots,0)$. For any $x\in \Sigma_{m,\lambda}^-$, by (\ref{eq 1}) it follows that
\begin{equation}\label{eq 2}
    \begin{aligned}
        0<-w_m^\lambda=&\Bar{u}_m(x)-\Bar{u}_m^\lambda(x)\\
        =& \int_{\Sigma_\lambda^-} \left(G_\alpha^+(x,y)-G_\alpha^+(x^\lambda,y)\right)\left(\frac{\bar{u}^p(y)}{|y|^{\beta_1}}-\frac{(\bar{u}^\lambda(y))^p}{|y^\lambda|^{\beta_1}} \right)dy\\
    &+\int_{\Sigma_\lambda\backslash\Sigma_\lambda^-} \left(G_\alpha^+(x,y)-G_\alpha^+(x^\lambda,y)\right)\left(\frac{\bar{u}^p(y)}{|y|^{\beta_1}}-\frac{(\bar{u}^\lambda(y))^p}{|y^\lambda|^{\beta_1}} \right)dy\\
    \leq &  \int_{\Sigma_\lambda^-} \left(G_\alpha^+(x,y)-G_\alpha^+(x^\lambda,y)\right)\left(\frac{\bar{u}^p(y)}{|y|^{\beta_1}}-\frac{(\bar{u}^\lambda(y))^p}{|y^\lambda|^{\beta_1}} \right)dy\\
    = & \int_{\Sigma_\lambda^-} \left(G_\alpha^+(x,y)-G_\alpha^+(x^\lambda,y)\right)\left( \frac{\bar{u}^p(y)}{|y|^{\beta_1}}- \frac{(\bar{u}^\lambda(y))^p}{|y|^{\beta_1}} +\frac{(\bar{u}^\lambda(y))^p}{|y|^{\beta_1}} - \frac{(\bar{u}^\lambda(y))^p} {|y^\lambda|^{\beta_1}} \right) dy\\
    \leq &\int_{\Sigma_\lambda^-} \left(G_\alpha^+(x,y)-G_\alpha^+(x^\lambda,y)\right) \left( \frac{\bar{u}^p(y)}{|y|^{\beta_1}}- \frac{(\bar{u}^\lambda(y))^p}{|y|^{\beta_1}}  \right) dy\\
    \overset{\mbox{MVT}}{=}&p \int_{\Sigma_\lambda^-} \left(G_\alpha^+(x,y)-G_\alpha^+(x^\lambda,y)\right) \frac{\psi_\lambda^{p-1}}{|y|^{\beta_1}}(\Bar{u}(y)-\Bar{u}^\lambda(y))dy,\;\;\;\; \text{where } \Bar{u}^\lambda(y)\leq \psi_\lambda(y)\leq \bar{u}(y),\\
    \leq &p \int_{\Sigma_\lambda^-} \left(G_\alpha^+(x,y)-G_\alpha^+(x^\lambda,y)\right) \frac{\bar{u}^{p-1}(y)}{|y|^{\beta_1}}(\Bar{u}(y)-\Bar{u}^\lambda(y))dy,\\
    \leq & C(p,n,\alpha) \int_{\Sigma_\lambda^-} \frac{1}{|x-y|^{n-\alpha}}\frac{\Bar{u}^{p-1}(y)}{|y|^{\beta_1}}|w^\lambda(y)|dy.
    \end{aligned}
\end{equation}
The last inequality is because
\begin{equation*}
    G_\alpha^+(x,y)\leq \frac{C(n,\alpha)}{|x-y|^{n-\alpha}},\;\;\;\;x\neq y.
\end{equation*}
Let $q$ be a real number greater than $\frac{n}{n-2m}$. By (\ref{eq 2}), Lemma \ref{HLS lemma} and H\"older inequality, we have
\begin{equation*}
\begin{aligned}
        \|w_m^\lambda\|_{L^q(\Sigma_{m,\lambda}^-)} \leq &\| C(p,n,\alpha) \int_{\Sigma_\lambda^-} 
 \frac{1}{|x-y|^{n-\alpha}} \frac{\Bar{u}^{p-1}(y)}{|y|^{\beta_1}}|w^\lambda(y)| dy\|_{L^q(\Sigma_{m,\lambda}^-)} \\
 \leq & C(p,q,n,\alpha)\|  \frac{\Bar{u}^{p-1}(y)}{|y|^{\beta_1}} w^\lambda(y)  \|_{L^\frac{nq}{n+\alpha q}(\Sigma_{\lambda}^-)}\\
 \leq & C(p,q,n,\alpha) \|w^\lambda\| _{L^q(\Sigma_{\lambda}^-)} \| \frac{\Bar{u}^{p-1}(y)}{|y|^{\beta_1}}  \|_{L^\frac{n}{\alpha}(\Sigma_{\lambda}^-)} .
\end{aligned}    
\end{equation*}
Similarly,
\begin{equation*}
\begin{aligned}
\|w^\lambda\|_{L^q(\Sigma_{\lambda}^-)} \leq &\| C(p,n,m) \int_{\Sigma_{m,\lambda}^-} 
 \frac{1}{|x-y|^{n-2m}} \frac{w_m^\lambda(y)}{|y|^{\beta_2}} dy\|_{L^q(\Sigma_\lambda^-)} \\
 \leq & C(p,q,n,m)\|  \frac{w_m^\lambda(y)}{|y|^{\beta_2}}   \|_{L^\frac{nq}{n+2m q}(\Sigma_{m,\lambda}^-)}\\
 \leq & C(p,q,n,m) \|w_m^\lambda\| _{L^q(\Sigma_{m,\lambda}^-)} \| \frac{1}{|y|^{\beta_2}}  \|_{L^\frac{n}{2m}(\Sigma_{m,\lambda}^-)} .
\end{aligned}    
\end{equation*}
Combining these two inequalities together, it follows that
\begin{equation}\label{eq 3}
\begin{aligned}
     \|w_m^\lambda\|_{L^q(\Sigma_{m,\lambda}^-)} \leq C(n,p,q,m,\alpha)  \|w_m^\lambda\|_{L^q(\Sigma_{m,\lambda}^-)}  \|\frac{\Bar{u}^{p-1}(y)}{|y|^{\beta_1}}  \|_{L^\frac{n}{\alpha}(\Sigma_{\lambda}^-)}  \|\frac{1}{|y|^{\beta_2}}  \|_{L^\frac{n}{2m}(\Sigma_{m,\lambda}^-)} ,\\
 \|w^\lambda\|_{L^q(\Sigma_{\lambda}^-)} \leq C(n,p,q,m,\alpha) \|w^\lambda\|_{L^q(\Sigma_{\lambda}^-)} \| \frac{\Bar{u}^{p-1}(y)}{|y|^{\beta_1}}  \|_{L^\frac{n}{\alpha}(\Sigma_{\lambda}^-)} \|\frac{1}{|y|^{\beta_2}}  \|_{L^\frac{n}{2m}(\Sigma_{m,\lambda}^-)} .
\end{aligned}
\end{equation}
Local boundness of $u$ implies that
\begin{equation*}
    \Bar{u}(x) \sim \frac{1}{|x|^{n-2m}},\;\;\;\;|x|\to\infty.
\end{equation*}
It follows that
\begin{equation*}
    \left(\frac{\Bar{u}^{p-1}(y)}{|y|^{\beta_1}}\right)^\frac{n}{\alpha}\sim \frac{1}{|y|^\frac{n(2m+\alpha)}{\alpha}} ,\;\;\;\;|y|\to\infty.
\end{equation*}
Also note that 
\begin{equation*}
    \begin{aligned}
       \left( \frac{1}{|y|^{\beta_2}}\right)^\frac{n}{2m}=\frac{1}{|y|^\frac{n(2m+\alpha)}{2m}} .
    \end{aligned}
\end{equation*}
Thus we can choose $\lambda$ sufficiently negative, such that
\begin{equation}\label{eq 4}
    \begin{aligned}
        C(n,p,q,m,\alpha) \| \frac{\Bar{u}^{p-1}(y)}{|y|^{\beta_1}}  \|_{L^\frac{n}{\alpha}(\Sigma_{\lambda}^-)} \|\frac{1}{|y|^{\beta_2}}  \|_{L^\frac{n}{2m}(\Sigma_{m,\lambda}^-)} \leq \frac{1}{2}.
    \end{aligned}
\end{equation}
(\ref{eq 3}) and (\ref{eq 4}) imply
\begin{equation*}
     \|w_m^\lambda\|_{L^q(\Sigma_{m,\lambda}^-)} =0,\;  \|w^\lambda\|_{L^q(\Sigma_{\lambda}^-)}=0,\;\;\;\;\text{for $\lambda$ sufficiently negative}.
\end{equation*}
Thus 
\begin{equation*}
    |\Sigma_{m,\lambda}^-|=0,\;|\Sigma_{\lambda}^-|=0,\;\;\;\;\text{for $\lambda$ sufficiently negative}.
\end{equation*}

\emph{Step 2.} We start form the neighborhood of $x_1=-\infty$ and move the plane to the right as long as $w_m^\lambda\geq 0,\; w^\lambda\geq 0$ a.e in $\Sigma_\lambda$ holds to the limiting position. 

Define
\begin{equation*}
    \lambda_0=\sup\{ \lambda<0\mid w_m^\lambda\geq 0, w^\lambda\geq 0, \rho\leq \lambda,\forall x\in\Sigma_\rho\backslash B_\xi(0^{\rho}) \}.
\end{equation*}
We claim $\lambda_0=0$. In fact, if $\lambda_0<\epsilon _0<0$ for some $\epsilon _0>0$, we can show by contradiction that $w_m^{\lambda_0}\equiv 0 $ or $w^{\lambda_0}\equiv 0$ a.e in $\Sigma_{\lambda_0}\backslash B_\xi (0^{\lambda_0})$.  

Otherwise, both $w_m^{\lambda_0}$ and $w^{\lambda_0}\not\equiv 0$ a.e in $\Sigma_{\lambda_0}\backslash B_\xi (0^{\lambda_0})$. We will show that one can choose $\epsilon>0$ sufficiently small, so that for all $\lambda\in(\lambda_0,\lambda_0+\epsilon)$, 
\begin{equation}\label{more lmd}
    w_m^\lambda\geq 0,\;w^\lambda\geq 0\;\text{a.e. in}\;\Sigma _\lambda,
\end{equation}
which is contradicted to the definition of $\lambda _0$. Then we arrive that $w_m^{\lambda_0}\equiv 0 $ or $w^{\lambda_0}\equiv 0$ a.e in $\Sigma_{\lambda_0}\backslash B_\xi (0^{\lambda_0})$.

To verify (\ref{more lmd}), we can use the same argument in step 1 and only to show that for all $\lambda\in(\lambda_0,\lambda_0+\epsilon)$, 
$\int_{\Sigma_\lambda^-}\left( \frac{\Bar{u}^{p-1}(y)}{|y|^{\beta_1}}\right) ^\frac{n}{\alpha} dy$ is also sufficiently small.

 Since \[\left( \frac{\Bar{u}^{p-1}(y)}{|y|^{\beta_1}}\right) ^\frac{n}{\alpha}\sim \dfrac{1}{|y|^{\frac{n(2m+\alpha)}{\alpha}}},\;\text{as}\;|y|\to \infty,\]
for any $\eta >0$, we can choose $R\gg 0$, such that 
\[
\int _{\big(\mathbb{R}^n_+\backslash B_\xi(0^\lambda)\big)\backslash B_R}\left( \frac{\Bar{u}^{p-1}(y)}{|y|^{\beta_1}}\right) ^\frac{n}{\alpha}dy<\eta.
\]
To show the integral on $B_R\cap \Sigma _\lambda ^-$ is small, we only need to prove that the measure of $B_R\cap \Sigma _\lambda ^-$ is small as $\epsilon$ is sufficiently small.

We claim that $w^{\lambda _0}>0$ in the interior of $\Sigma _{\lambda _0}\backslash B_\xi(0^{\lambda _0})$. If the claim is false, one can find $x_0\in \Sigma _{\lambda _0}\backslash B_\xi(0^{\lambda _0})$, such that 
\[0=\Bar{u}(x_0)-\Bar{u}^{\lambda _0}(x_0)=\int_{\Sigma_{\lambda_0}} \left(G_{2m}^+(x_0,y)-G_{2m}^+(x^{\lambda _0}_0,y)\right) \left(\frac{\bar{u}_m(y)}{|y|^{\beta_2}}-\frac{\bar{u}^{\lambda _0}_m(y)}{|y^{\lambda _0}|^{\beta_2}} \right)dy. \]
Recalling that $G_{2m}^+(x,y)-G_{2m}^+(x^{\lambda _0},y)>0$ for any $y\in \Sigma _{\lambda _0}$, we have that 
$\frac{\bar{u}_m(y)}{|y|^{\beta_2}}=\frac{\bar{u}^{\lambda _0}_m(y)}{|y^{\lambda _0}|^{\beta_2}}$ for any $y\in \Sigma _{\lambda _0}$. This is impossible, since $w_m^{\lambda_0} \not\equiv 0$ a.e. in $\Sigma _{\lambda_0}$ and $|y|>|y^{\lambda _0}|$. Thus
\begin{equation}\label{w>0}
    w^{\lambda_0}(x)>0,\;\text{in the interior of}\;\Sigma _{\lambda _0}\backslash B_\xi(0^{\lambda _0}).
\end{equation}
Denote
\[
E_\gamma = \{x\in\big(\Sigma _{\lambda _0}\backslash B_\xi(0^{\lambda _0})\big)\cap B_R | w^{\lambda _0}>\gamma\},\; F_\gamma = \big(\Sigma _{\lambda _0}\backslash B_\xi(0^{\lambda _0})\big)\cap B_R\backslash E_\gamma.
\]
According to (\ref{w>0}), $|F_\gamma|\to 0$ as $\gamma\to 0$. Denote
\[D_\lambda =\big(\Sigma _{\lambda }\backslash B_\xi(0^{\lambda })\big)\backslash\big(\Sigma _{\lambda _0}\backslash B_\xi(0^{\lambda _0})\big)\cap B_R. \]
Then 
\[
\Sigma_{\lambda}^-\cap B_R\subset (\Sigma_{\lambda}^-\cap E_\gamma)\cup F_\gamma\cup D_\lambda.
\]
Obviously, $\lim\limits _{\epsilon\to 0}|D_\lambda|=0$. Now we prove that $|\Sigma_{\lambda}^-\cap E_\gamma|$ can be small enough if we choose $\epsilon$ is small.

For any $x\in \Sigma_{\lambda}^-\cap E_\gamma$, 
\[
0\geq w^\lambda (x)=\bar{u}^\lambda (x)-\bar{u}(x)=\bar{u}^\lambda (x)-\bar{u}^{\lambda_0} (x)+\bar{u}^{\lambda_0} (x)-\bar{u}(x).
\]
Combining with the definition of $E_\gamma$, we get $\bar{u}^{\lambda_0} (x)-\bar{u}^{\lambda} (x)>\gamma$. Thus
\[
\Sigma_{\lambda}^-\cap E_\gamma\subset G_\gamma:=\{x\in B_R\backslash \big( B_\xi(0^{\lambda})\cup B_\xi(0^{\lambda_0}) \big) \mid \bar{u}^{\lambda_0}(x)-\bar{u}^{\lambda} (x)>\gamma\}.
\]
By Chebyshev's inequality, 
\[
|G_\gamma|\leq \dfrac{1}{\gamma}\int _{G_\gamma}|\bar{u}^\lambda (x)-\bar{u}^{\lambda_0} (x)|dx.
\]
We can fix $\gamma >0$, such that $|F_\gamma|$ is small enough, then we can choose $\epsilon$ is small enough, such that $|D_\gamma|$ and $|G_\gamma|$ are small. Therefore we have proved that the measure of $B_R\cap \Sigma _\lambda^-$ is small as $\epsilon$ is small enough.

So we can choose $\epsilon>0$ sufficiently small, so that for all $\lambda\in(\lambda_0,\lambda_0+\epsilon)$, $\int_{\Sigma_\lambda^-}\left( \frac{\Bar{u}^{p-1}(y)}{|y|^{\beta_1}}\right) ^\frac{n}{\alpha} dy$ is also sufficiently small. Since it holds that
\begin{equation*}
\|w_m^\lambda\|_{L^q(\Sigma_{m,\lambda}^-)} \leq C(n,p,q,m,\alpha)  \|w_m^\lambda\|_{L^q(\Sigma_{m,\lambda}^-)}  \|\frac{\Bar{u}^{p-1}(y)}{|y|^{\beta_1}}  \|_{L^\frac{n}{\alpha}(\Sigma_{\lambda}^-)}  \|\frac{1}{|y|^{\beta_2}}  \|_{L^\frac{n}{2m}(\Sigma_{m,\lambda}^-)}\;\;\;\;\text{for all }\lambda\in(\lambda_0,\lambda_0+\epsilon),
\end{equation*}
we can choose $\epsilon$ such that 
\begin{equation*}
            C(n,p,q,m,\alpha) \| \frac{\Bar{u}^{p-1}(y)}{|y|^{\beta_1}}  \|_{L^\frac{n}{\alpha}(\Sigma_{\lambda}^-)} \|\frac{1}{|y|^{\beta_2}}  \|_{L^\frac{n}{2m}(\Sigma_{m,\lambda}^-)} \leq \frac{1}{2}.
\end{equation*}
Thus $|\Sigma_{m,\lambda}^-|=0$. Then by 
\begin{equation*}
 \|w^\lambda\|_{L^q(\Sigma_{\lambda}^-)} \leq C(n,p,q,m,\alpha) \|w^\lambda\|_{L^q(\Sigma_{\lambda}^-)} \| \frac{\Bar{u}^{p-1}(y)}{|y|^{\beta_1}}  \|_{L^\frac{n}{\alpha}(\Sigma_{\lambda}^-)} \|\frac{1}{|y|^{\beta_2}}  \|_{L^\frac{n}{2m}(\Sigma_{m,\lambda}^-)},   \;\;\;\;\text{for all }\lambda\in(\lambda_0,\lambda_0+\epsilon),
\end{equation*}
we get $|\Sigma_{\lambda}^-|=0$. Hence for all $\lambda\in(\lambda_0,\lambda_0+\epsilon)$, it follows that $w^\lambda\geq0,\; w_m^\lambda\geq 0$ a.e in $\Sigma_\lambda\backslash B_\xi(0^\lambda)$. It is contradicted to the definition of $\lambda_0$. Thus for $\lambda_0$ we have $w_m^{\lambda_0}\equiv 0,\; w^{\lambda_0}\equiv 0$ a.e in $\Sigma_{\lambda_0}\backslash B_\xi (0^{\lambda_0})$.  

But $w^{\lambda_0}\equiv 0$ or $w_m^{\lambda_0}\equiv 0$ means $\Bar{u}$ or $\Bar{u}_m$ is singular at both $0$ and $0^{\lambda_0}$, which is impossible. So $\lambda_0$ can only be 0.

\emph{Step 3.} We will show that $u(x)$ only depends on $x_n$, and so is $u_m(x)$. 

By \emph{Step 2}, $w^0\geq 0$ a.e in $\Sigma_0\backslash B_\xi(0)$. Similarly, by moving the plane from $+\infty$ to the left along the $x_1$-axis, we can show that $w^0\leq 0$ a.e in $\Sigma_0\backslash B_\xi(0)$. Thus $\bar{u}$ is symmetric about the plane passing through $0$ and perpendicular to the $x_1$-axis. Since $0$ can be any point $z^0\in\mathbb{R}^n$ and $x_1$ can be any other directions perpendicular to $x_n$-axis, we have actually reached the conclusion that $\Bar{u}$ is rotationally symmetric about the line passing through $z^0$ and parallel to the $x_n$-axis. Take two point $X_1,\;X_2\in\mathbb{R}^n_+$ with the same $x_n$-coordinate, and let $z^0$ be the projection of the midpoint of $X_1$ and $X_2$ onto $\mathbb{R}^n_+$. Let $Y_1=\frac{X_1-z^0}{|X_1-z^0|^{n-2m}}+z^0$, $Y_2=\frac{X_2-z^0}{|X_2-z^0|^{n-2m}}+z^0$. One can see that $Y_1$ and $Y_2$ is rotationally symmetric about the line passing through $z^0$ and the midpoint of $X_1$ and $X_2$. Thus $\Bar{u}(Y_1)=\Bar{u}(Y_2)$. Since $|X_1-z^0|=|X_2-z^0|$, we have $u(X_1)=u(X_2)$. Hence $u$ depends only on the $x_n$ variable, i.e. $u(x)=u(x_n)$. Moreover, we also conclude that $u_m$ depends only on the $x_n$ variable.

\emph{Step 4.} We will reach a contradiction.

Choose $R$ sufficiently large and fix $x\in\mathbb{R}^n_+$ such that $\frac{3}{2}x_n>R>1$ and $G_\alpha^+(x,y)\geq C(n,\alpha)\frac{(x_ny_n)^{\alpha/2}}{|x-y|^n}$ (see \cite{chen2015liouville}). Set $x=(x',x_n),\;y=(y',y_n),\;r=|x'-y'|,\;a=|x_n-y_n|$. Then
\begin{equation*}
    \begin{aligned}
    +\infty> u_m(x_n)
    =& \int_{\mathbb{R}^n_+}G_\alpha^+(x,y)u^p(y_n)dy,\\
    \geq &\int_{\frac{3}{2}x_n}^\infty u^p(y_n) \int_{\mathbb{R}^{n-1} \backslash B_R(0)} G^+_\alpha(x,y) dy'dy_n,\\
    \geq &C(n,\alpha) \int_{\frac{3}{2}x_n}^\infty u^p(y_n) (x_ny_n)^{\alpha/2}  \int_{\mathbb{R}^{n-1} \backslash B_R(0)} \frac{1}{|x-y|^n} dy'dy_n,\\
    \geq & C(n,\alpha) x_n^{\alpha/2} \int_{\frac{3}{2}x_n}^\infty u^p(y_n) y_n^{\alpha/2} \int_R^\infty \frac{r^{n-2}}{(r^2+a^2)^{n/2}}drdy_n,\\
    \geq &C(n,\alpha) x_n^{\alpha/2} \int_{\frac{3}{2}x_n}^\infty u^p(y_n) y_n^{\alpha/2} \frac{1}{|x_n-y_n|} \int_{R/a}^\infty \frac{\tau^{n-2}}{(\tau^2+1^2)^{n/2}}d\tau dy_n,\\
    \geq & C(n,\alpha)x_n^{\alpha/2} \int_{\frac{3}{2}x_n}^\infty u^p(y_n) y_n^{\alpha/2-1} dy_n. 
    \end{aligned}
\end{equation*}
The last inequality above is because 
\begin{equation*}
    \frac{R}{a}=\frac{R}{y_n-x_n}<\frac{2R}{x_n}<3\;\;\text{and } \frac{1}{|x_n-y_n|}>\frac{1}{y_n}\;\;\;\;\text{for }y_n>\frac{3}{2}x_n.
\end{equation*}
This implies that there exists a sequence $\{y_n^{(i)}\}$ such that
\begin{equation}\label{eq 9}
    \lim_{i\to\infty} y_n^{(i)} = \infty,\; \lim_{i\to\infty}(y_n^{(i)})^{\alpha/2} u^p(y_n^{(i)})=0.
\end{equation}
Let $x=(0,x_n)\in \mathbb{R}^n_+$. Then similarly we get
\begin{equation*}
    \begin{aligned}
        +\infty>u_m(x_n) \geq C(n,\alpha)x_n^{\alpha/2} \int_0^1 u^p(y_n)  y_n^{\alpha/2} \frac{1}{|x_n-y_n|} dy_n.
    \end{aligned}
\end{equation*}
Let $x_n=2K>2$, then
\begin{equation}\label{eq 7}
    u_m(x_n)>C(n,\alpha){K^{\alpha/2}} \int_0^1 u^p(y_n) y_n^{\alpha/2} \frac{1}{{K}} dy_n> C(n,\alpha,p,u) K^{\alpha/2-1}=  C(n,\alpha,p,u) x_n^{\alpha/2-1}.
\end{equation}
On the other hand, fix $x\in\mathbb{R}^n_+$, for $R$ sufficiently large such that $R>2x_n$,
\begin{equation}\label{eq 5}
    \begin{aligned}
        +\infty>u(x_n)= &\int_{\mathbb{R}^n_+} G_{2m}^+(x,y) u_m(x,y)dy,\\
        \geq &\int_{\frac{3}{2}x_n}^\infty u_m(y_n) \int_{\mathbb{R}^{n-1}\backslash B_R(0)} \left( \frac{1}{|x-y|^{n-2m}}-\frac{1}{|x^*-y|^{n-2m}} \right) dy'dy_n,\\
         \overset{\mbox{MVT}}{\geq} &2(n-2m)x_n\int_{\frac{3}{2}x_n}^\infty u_m(y_n)(y_n-x_n) \int_{\mathbb{R}^{n-1}\backslash B_R(0)} \frac{1}{(|x'-y'|^2+|x_n+y_n|^2)^{\frac{n-2m}{2}+1}}dy'dy_n,\\
        = & 2(n-2m)x_n \int_{\frac{3}{2}x_n}^\infty u_m(y_n)(y_n-x_n)  \int_R^\infty \frac{r^{n-2}}{(a^2+r^2)^{\frac{n-2m}{2}+1}} drdy_n,\\
         = & 2(n-2m)x_n \int_{\frac{3}{2}x_n}^\infty u_m(y_n) \frac{y_n-x_n}{|x_n+y_n|^{3-2m}} \int_{R/a}^\infty \frac{\tau^{n-2}}{(1+\tau^2)^{\frac{n-2m}{2}+1}}d\tau dy_n.
    \end{aligned}
\end{equation}
For $m\geq \frac{3}{2}$, the integral $\int_{R/a}^\infty \frac{\tau^{n-2}}{(1+\tau^2)^{\frac{n-2m}{2}+1}}d\tau$ diverges, which contradicts with the finiteness of $u(x)$. Thus we only need to consider the case $m=1$. When $m=1$, it follows from (\ref{eq 5}) that
\begin{equation}\label{eq 8}
    u(x_n)\geq C(n,m) x_n \int_{\frac{3}{2}x_n}^\infty u_m(y_n)  \int_{3}^\infty \frac{\tau^{n-2}}{(1+\tau^2)^{n/2}}d\tau dy_n
    \geq C(n,m) x_n \int_{\frac{3}{2}x_n}^\infty u_m(y_n) dy_n.
\end{equation}
Combining (\ref{eq 7}) and (\ref{eq 8}), we have
\begin{equation*}
    u(x_n)\geq C(n,m) x_n\int_{\frac{3}{2}x_n}^{2x_n}y_n^{\alpha/2-1}dy_n > C(n,m,\alpha) x_n^{\alpha/2+1}.
\end{equation*}
Thus 
\begin{equation*}
    u^p(x_n) x_n^{\alpha/2}>C(n,m,\alpha) x_n^{p(\alpha/2+1)+\alpha/2},
\end{equation*}
which contradicts with (\ref{eq 9}).
\end{proof}    

\begin{proof}[\textbf{Proof of Corollary \ref{lm}}]
First it can be shown that any solution of (\ref{intro eq 2}) is also a solution of the integral equation
(\ref{equiv integral equation}), which is a direct result of Theorem 4.1 in \cite{li2016symmetry}. In fact the boundary condition of the partial differential equation in \cite{li2016symmetry} is too strong, and the boundary condition in (\ref{intro eq 2}) is sufficient.

% So $u$ is also a solution of the integral equation
% \begin{equation}
%     u(x)=\int_{R^n_+}G_{2m+\alpha}^+(x,y)u^p(y)dy.
% \end{equation}
Then by Theorem \ref{moving planes}, $u$ is trivial.
\end{proof}

\section{A priori estimates}\label{sec 3}

\begin{proof}[\textbf{Proof of Theorem \ref{main theorem}}]
Suppose $\|u\|_{L^\infty(\Omega)}\leq C$ does not hold. Then there exists a sequence of solutions $\{u_k\}$ to (\ref{in-1}), such that
\begin{equation}\label{contradiction}
	\max \limits _\Omega u_k\to \infty.
\end{equation}
Let
\begin{equation}
	H_{ki}(x):= (-\triangle)^iu_k(x),\,\, x\in \Omega ,\,\, i =0,1,\cdots m.
\end{equation}
	By the definition of $H_{km}$, it satisfies the equation
	 \begin{equation}
		\begin{cases}
			(-\triangle)^{\frac{\alpha}{2}}H_{km} = u_k ^p\geq 0, &in\,\, \Omega,\\
			H_{km}= 0, &on \,\, \Omega ^c.
		\end{cases}
	\end{equation}
	Combining with the {\itshape maximum principle} (see Theorem 1.1 in \cite{liu2022dirichlet}), $H_{km}\geq 0$ in $\Omega$. Similarly,
	$H_{k,m-1}$ satisfies the equation
	\[
	\begin{cases}
			(-\triangle)H_{k,m-1} = H_{km}\geq 0, &in\,\, \Omega,\\
			H_{k,m-1}= 0, &on \,\, \partial\Omega.
	\end{cases}
	\]
Therefore, $H_{k,m-1}\geq 0$ in $\Omega$ by the {\itshape maximum principle} for subharmonic functions. Repeating the process, we find out that, for each $i=0,1,\cdots ,  m$,
\begin{equation}
	H_{ki}\geq 0 \,\,\, in  \,\,\Omega .
\end{equation}

   Let\begin{equation}\label{para0}
   	\beta := \dfrac{p-1}{2m+\alpha},\,\,\, m_k :=\max \limits _{0\leq i\leq m}(\sup \limits _{\Omega} H_{ki})^{\frac{1}{1+2\beta i}},\,\,\, \lambda _k := m_k^{-\beta}.
   \end{equation}
   By (\ref{contradiction}), $m_k\to \infty$ as $k\to \infty$. For each $k$, $m_k$ is taken from $m+1$ values. Therefore, up to a subsequence, there exists an integer $0\leq j\leq m$, and $\{x^k\}\subset \Omega$, such that
   \begin{equation}\label{parameter}
   	m_k = (\sup \limits _{\Omega} H_{kj})^{\frac{1}{1+2\beta j}}:=
   	(H_{kj}(x^k))^{\frac{1}{1+2\beta j}}.
   \end{equation}

   Let \begin{equation}\label{vk}
   	v_k(x):=\dfrac{1}{m_k}u_k(\lambda _k x+x^k),
   \end{equation}
   then we have
   \begin{equation}
   	(-\triangle)^{\frac{\alpha}{2}+m}v_k(x) = v^p_k(x),\,\,\, x\in \Omega _k:=\left\{x\in\mathbb{R}^n \bigg | x= \dfrac{y-x^k}{\lambda _k}, y\in \Omega\right\}.
   \end{equation}

   Let $d_k = {\rm dist}(x^k,\partial \Omega)$. Notice that $\{d_k\}$ is bounded and $\lambda _k\to 0$ as $k\to \infty$. Up to a subsequence, the value of $\lim _{k\to\infty}\tfrac{d_k}{\lambda _k}$ has three possibilities ($\infty$, a positive constant, and 0). We will carry out the proof using the contradiction argument while exhausting all three possibilities.
   \\\\
   \textbf{Case (\romannumeral 1).} $\lim _{k\to\infty}\frac{d_k}{\lambda _k} = \infty$. Then
   \[
   \Omega _k\to \mathbb{R}^n\,\,\,\mbox{as}\,\, k \to \infty.
   \]
   We claim that there exists a function $v$ and a subsequence of $\{v_k\}$ (still denoted by $\{v_k\}$), such that as $k\to \infty$, for each $i=0,1,\cdots , m$,
   \begin{equation}\label{convergence}
   	(-\triangle)^{i}v_k(x)\to (-\triangle)^{i}v(x)\,\,\mbox{and}\,\,(-\triangle)^{\frac{\alpha}{2}+m}v_k(x)\to (-\triangle)^{\frac{\alpha}{2}+m}v(x)\,.
   \end{equation}
 thus\begin{equation}\label{R^n eq}
 	(-\triangle)^{\frac{\alpha}{2}+m}v(x)=v^p(x),\,\,\,x\in \mathbb{R}^n.
 \end{equation}

 In fact, if the claim holds, then (\ref{parameter}) and (\ref{vk}) imply that
   \[
     (-\triangle)^jv_k(0)=\dfrac{\lambda _k^{2j}}{m_k}H_{kj}(x^k)=\dfrac{H_{kj}(x^k)}{m_k^{1+2\beta j}}=1.
   \]
   Thus\begin{equation}\label{v(0)}
   	(-\triangle)^jv(0) = \lim\limits _{k\to\infty}(-\triangle)^jv_k(0) =1.
   \end{equation}
   However, a Liouville type theorem \cite{xu2022super} states that the only nonnegative solution to the equation (\ref{R^n eq}) is zero. This is a contradiction. Hence this case cannot happen.

   Now we begin to prove the claim. Conveniently, for each integer $k\geq 0$, we define
   \[
   C^{k,\gamma}=\begin{cases}
   	C^{k,\gamma} & \mbox {if}\,\,0<\gamma <1,\\
    C^{k+1,\gamma -1} &\mbox {if}\,\, 1<\gamma \leq 2.
   \end{cases}
   \]

   To verify (\ref{convergence}), we need to establish a uniform interior estimate for $v_k$ in a neighborhood of any point $x\in\mathbb{R}^n$, which is independent of both $k$ and $x$.

   From now on, we use $C$ to denote a universal constant whose values may vary from line to line.

   For any $x^0\in \mathbb{R}^n$, there exists an integer $N>0$ such that when $k>N$, we have $B_3(x^0)\subset \Omega _k$. Let
   \begin{equation}
   	\begin{cases}
   		h_{k1}(x)& : =(-\triangle)v_k(x)  = \dfrac{\lambda _k ^2}{m_k}H_{k1}(\lambda _kx+x^k) = \dfrac{H_{k1}(\lambda _k x + x^k)}{m_k^{1+2\beta}},\\
   		&\vdots\\
   		h_{ki}(x)& : =(-\triangle)h_{k,i-1}(x)=\dfrac{\lambda ^{2i}}{m_k}H_{ki}(\lambda _kx+x^k) = \dfrac{H_{ki}(\lambda _kx+x^k)}{m_k^{1+2\beta i}},\\
   		&\vdots\\
   	   g_k(x) & : = (-\triangle)^{\frac{\alpha}{2}}h_{km}(x) = (-\triangle)^{\frac{\alpha}{2}+m}v_k(x) = v_k^p(x),
   	\end{cases}
   	\quad\quad\mbox{in}\,\,\Omega _k.
   \end{equation}
   Equation (\ref{vk}) and (\ref{para0}) imply that
   \[
   	\| v_k \| _{L^\infty(\Omega _k)}\leq 1,
   \]
   thus
   \[
   	\| g_k \| _{L^\infty(\Omega _k)} = \| v_k  ^p\| _{L^\infty(\Omega _k)}\leq 1.
   \]
	Moreover, for any $x\in \Omega _k$ and $i=1, 2, \cdots , m$, 
	\[
	\| h_{ki}\| _{L^\infty(\Omega _k)} = \sup \limits _{x\in \Omega _k} \left | \dfrac{H_{ki}(\lambda _kx+x^k)}{m_k^{1+2\beta i}}\right |\leq 1.
	\]
	
	Let $\varphi$ be a smooth cutoff function such that $0\leq \varphi (x)\leq 1$ in $\mathbb{R}^n$, ${\rm {supp}}\,\varphi \subset B_3(x^0)$ and $\varphi (x)\equiv 1$ in $B_2(x^0)$. Define
	\begin{equation}
		\bar{h}_{km}(x):=c_{n,-\alpha}\int _{\mathbb{R}^n}\dfrac{\varphi (x)g_k(x)}{|x-y|^{n-\alpha}}dy.
	\end{equation}
	
	Then
	\[
	\bar{h}_{km} = (-\triangle)^{-\frac{\alpha}{2}}(\varphi g_k)=(-\triangle)^{1-\frac{\alpha}{2}}\circ (-\triangle)^{-1}(\varphi g_k).
	\]
	
	Let
	\[
	h_k(x):=(-\triangle)^{-1}(\varphi g_k)(x).
	\]
	Then
	\[
	h_k(x) = c_{n,-2}\int _{\mathbb{R}^n}\dfrac{\varphi (y)g_k(y)}{|x-y|^{n-2}}dy.
	\]
	By the $C^{1,\sigma}$ estimates for Poisson's equation, %加参考文献
	we derive that there exists a universal constant $C$ which is independent of $k$ and $x^0$ such that for any $0<\tau <1$,
	\begin{equation}
		\| h_k \| _{C^{1,\tau}({B_3(x^0)})}\leq C(\| \varphi g_k\| _{L^\infty(B_3(x^0))}+\| h_k \| _{L^\infty(B_3(x^0))} ).
	\end{equation}
	Applying Proposition 2.7 in \cite{silvestre}
	to the equation
    \[
    \bar{h}_{km} = (-\triangle)^{1-\frac{\alpha}{2}}h_k(x),
    \]
 we have that for all $0<\sigma <\min\{1,\tau +\alpha -1 \}$,
	\begin{equation}\label{barhkm norm}
		\| \bar{h}_{km} \| _{C^{0,\sigma}({B_3(x^0)})}<C. 
	\end{equation}
	
	Since
	\begin{equation}
		(-\triangle)^{\frac{\alpha}{2}}(h_{km}-\bar{h}_{km})=0, \quad x\in B_2(x^0),
	\end{equation}
	one can use Schauder interior estimates for the fractional Laplacian(more details in chapter 12 in \cite{chen2020fractional}) to derive that $h_{km}-\bar{h}_{km}$ is smooth in $B_{\frac{3}{2}}(x^0)$. Moreover, its $C^{0,\sigma}$ -norm can be  estimated from the $L^\infty$ -norm of $h_{km}-\bar{h}_{km}$, which is uniformly bounded. Combining with (\ref{barhkm norm}), we get	
	\begin{equation}\label{hkm norm}
		\| {h}_{km} \| _{C^{0,\sigma}({B_\frac{3}{2}(x^0)})}<C.
	\end{equation}

	Recall that \[\| {h}_{k,m-1} \| _{L^\infty(\Omega _k)}\leq 1,\]
	we use Schauder interior estimates in $B_{\frac{3}{2}}(x^0)$ to derive that
	\begin{equation}
	\begin{split}
		\| {h}_{k,m-1} \| _{C^{2,\sigma}}(B_{\frac{5}{4}}(x^0))& \leq C(\| {h}_{k,m-1}\| _{L^\infty({B_\frac{3}{2}(x^0)})}+\| {h}_{km} \| _{C^{0,\sigma}({B_\frac{3}{2}(x^0)})})\\
		&\leq C, \quad \mbox{here $C$ is independent of $k$ and $x^0$}.
	\end{split}
	\end{equation}
	  We can apply Schauder interior estimates in smaller balls and do the same process above to $h_{k,m-2},h_{k,m-3},\cdots $. Finally we find out that there exists a universal constant $C$ which is independent of $k$ and $x^0$ such that
	\begin{equation}\label{vk norm}
		\| v_k\| _{C^{2m,\sigma}(B_1(x^0))}\leq C.
	\end{equation}
	
	Due to the above uniform regularity estimate, it then follows from the {\itshape Arzel{\`a}-Ascoli theorem} that there exists a converging subsequence of $\{v_k\}$ in $B_1(x^0)$, denoted by $\{v_{1k}\}$. Moreover, for any muti index $\gamma$, $|\gamma|\leq 2m$,   $\{D^\gamma v_{1k}\}$ converges in $B_1(x^0)$. Then, similar to the process above, one can find a subsequence of $\{v_{1k}\}$, denoted by $\{v_{2k}\}$, such that for any muti index $\gamma$, $|\gamma|\leq 2m$,   $\{D^\gamma v_{2k}\}$ converges in $B_2(x^0)$. By induction, we get a chain of subsequence
	\[
	\{v_{1k}\}\supset \{v_{2k}\} \supset \{v_{3k}\} \cdots
	\]
	such that for any muti index $\gamma$, $|\gamma|\leq 2m$, $\{D^\gamma v_{ik}\}$ converges in $B_i(x^0)$ as $k\to\infty$. Now we take the diagonal sequence $\{v_{ii}\}$, then for any muti index $\gamma$, $|\gamma|\leq 2m$, $\{D^\gamma v_{ii}\}$ converges at all points in any $B_R(0)$. Thus we have constructed a subsequence of solutions(still denoted by $\{v_k\}$) that satisfies the first half of (\ref{convergence}).
	
	In order to prove the second half, recall that
	\[
	g_k:=(-\triangle)^{\frac{\alpha}{2}+m}v_k = v_k^p,
	\]
	from (\ref{vk norm}), we immediately derive, for any $x\in\mathbb{R}^n$ and for some $0<\sigma <1$,
	\[
	\| g_k\| _{C^{0,\sigma}(B_1(x))}\leq C,\quad \mbox{here $C$ is independent of $k$ and $x$}.
	\]
	By the Schauder's estimates on the equation
	\[
	h_k = (-\triangle)^{-1}(\varphi g_k),
	\]
	and similar to the argument in deriving (\ref{hkm norm}), we can show that there exists a constant $C$ independent of $k$ and $x$, such that
	\begin{equation}\label{hkm sigma+alpha norm}
		\| h_{km}\| _{C^{0,\sigma+\alpha}(B_1(x))}\leq C.
	\end{equation}
	
	By the definition of the fractional Laplacian and $h_{km}$, we have
	\begin{equation}
		\begin{split}
			(-\triangle)^{\frac{\alpha}{2}+m}v_k(x) & = (-\triangle)^{\frac{\alpha}{2}}h_{km}(x)\\
			&= \dfrac{C_{n,\alpha}}{2}\int _{\mathbb{R}^n}\dfrac{2h_{km}(x)-h_{km}(x+y)-h_{km}(x-y)}{|y|^{n+\alpha}}dy\\
			&= \dfrac{C_{n,\alpha}}{2}\bigg( \int _{\mathbb{R}^n\backslash B_1(0)}\dfrac{2h_{km}(x)-h_{km}(x+y)-h_{km}(x-y)}{|y|^{n+\alpha}}dy \\
			& + \int _{B_1(0)}\dfrac{2h_{km}(x)-h_{km}(x+y)-h_{km}(x-y)}{|y|^{n+\alpha}}dy \bigg)\\
			& = \dfrac{C_{n,\alpha}}{2}(I_1 + I_2).
		\end{split}
	\end{equation}
	For any $y\in B_1(0)$,
	\begin{equation}\label{I2}
		\dfrac{2h_{km}(x)-h_{km}(x+y)-h_{km}(x-y)}{|y|^{n+\alpha}}\leq\dfrac{C}{|y|^{n-\sigma}} \leqslant \dfrac{C}{|y|^{n-\sigma}} \cdot \dfrac{1}{1+|y|^{n+\alpha}},
	\end{equation}	
	here we use the equation (\ref{hkm sigma+alpha norm}) and notice that $\frac{1}{1+|y|^{n+\alpha}}\geq \frac{1}{2}$.
	
	For any $y\in \mathbb{R}^n\backslash B_1(0)$,
	\begin{equation}\label{I1}
		\dfrac{2h_{km}(x)-h_{km}(x+y)-h_{km}(x-y)}{|y|^{n+\alpha}}\leq \dfrac{C}{|y|^{n+\alpha}} = \dfrac{C}{1+|y|^{n+\alpha}}\cdot \dfrac{1+|y|^{n+\alpha}}{|y|^{n+\alpha}}\leq \dfrac{C}{1+|y|^{n+\alpha}}.
	\end{equation}
	
	Combining (\ref{I1}) and (\ref{I2}), we have that, for any $y\in\mathbb{R}^n$,
	\begin{equation}
		\begin{split}
			\dfrac{2h_{km}(x)-h_{km}(x+y)-h_{km}(x-y)}{|y|^{n+\alpha}} & \leq \dfrac{C}{1+|y|^{n+\alpha}}\left (1+\dfrac{1}{|y|^{n-\sigma}} \right)\\
			& := g(y).
		\end{split}
	\end{equation}
	Since $g\in L^1(\mathbb{R}^n)$, from the {\itshape Lebesgue's dominated convergence theorem} it yields that
	\[
	\begin{split}
		\lim\limits _{k\to\infty}(-\triangle)^{\frac{\alpha}{2}+m}v_k(x) & =
		\lim\limits _{k\to\infty} \dfrac{C_{n,\alpha}}{2}\int _{\mathbb{R}^n}\dfrac{2h_{km}(x)-h_{km}(x+y)-h_{km}(x-y)}{|y|^{n+\alpha}}dy\\
		& =  \dfrac{C_{n,\alpha}}{2}\int _{\mathbb{R}^n}\dfrac{2(-\triangle)^mv(x)-(-\triangle)^mv(x+y)-(-\triangle)^mv(x-y)}{|y|^{n+\alpha}}dy\\
		& = (-\triangle)^{\frac{\alpha}{2}+m}v(x).
	\end{split}
	\]
	This proves (\ref{convergence}) and (\ref{R^n eq}).
	\\\\
	\textbf{Case (\romannumeral 2).} $\lim _{k\to\infty}\frac{d_k}{\lambda _k} = C >0$. Then
	\[
   \Omega _k\to \mathbb{R}^n_{+C}:=\{x_n\geq -C \,|\, x\in\mathbb{R}^n \}\,\,\,\mbox{as}\,\, k \to \infty.
   \]
   Similar to Case (\romannumeral 1), here we are able to establish the existence of a function $v$ and a subsequence of $\{v_k\}$, such that as $k\to \infty$, for each $i=0,1,\cdots , m$,
   \begin{equation}\label{convergence2}
   	(-\triangle)^{i}v_k(x)\to (-\triangle)^{i}v(x)\,\,\mbox{and}\,\,(-\triangle)^{\frac{\alpha}{2}+m}v_k(x)\to (-\triangle)^{\frac{\alpha}{2}+m}v(x)\,,
   \end{equation}
   and thus
   \begin{equation}\label{halfspace eq}
   	(-\triangle)^{\frac{\alpha}{2}+m}v(x) = v^p(x),\,\,\,x\in \mathbb{R}^n_{+C}.
   \end{equation}

   According to Corollary \ref{lm},
   (\ref{halfspace eq}) has no positive solution. This contradicts with (\ref{v(0)}). Next we prove (\ref{convergence2}) and (\ref{halfspace eq}).

   Let $D_1 = B_1(0)\cap \{x_n \geq 0\}$. Then similar to the argument as in Case (\romannumeral 1), one can show that there exists a subsequence $\{v_{1k}\}$ of $\{v_k\}$, such that for each $i = 0,1,\cdots , m$ and for any $x\in D_1$,
   \[
   	(-\triangle)^{i}v_{1k}(x)\to (-\triangle)^{i}v(x)\,\,\mbox{and}\,\,(-\triangle)^{\frac{\alpha}{2}+m}v_{1k}(x)\to (-\triangle)^{\frac{\alpha}{2}+m}v(x)\,.
   \]

   Let $D_2 = B_2(0)\cap \{x_n \geq -\frac{C}{2}\}$. We can find a subsequence $\{v_{2k}\}$ of $\{v_{1k}\}$, such that for each $i = 0,1,\cdots , m$ and for any $x\in D_2$,
   \[
   	(-\triangle)^{i}v_{2k}(x)\to (-\triangle)^{i}v(x)\,\,\mbox{and}\,\,(-\triangle)^{\frac{\alpha}{2}+m}v_{2k}(x)\to (-\triangle)^{\frac{\alpha}{2}+m}v(x)\,.
   \]
   Repeating the above process and take the diagonal sequence $\{v_{kk}\}$. It is easy to see that for each $i = 0,1,\cdots , m$ and for any $x\in \mathbb{R}^n_{+C}$,
   \[
   	(-\triangle)^{i}v_{kk}(x)\to (-\triangle)^{i}v(x)\,\,\mbox{and}\,\,(-\triangle)^{\frac{\alpha}{2}+m}v_{kk}(x)\to (-\triangle)^{\frac{\alpha}{2}+m}v(x)\,.
   \]
   This verifies (\ref{convergence2}) and (\ref{halfspace eq}).
   \\\\
	\textbf{Case (\romannumeral 3).} $\lim _{k\to\infty}\frac{d_k}{\lambda _k} = 0$.
	
	In this case, there exists a point $x^0\in\partial \Omega$ and a subsequence $\{x^k\}$, still denoted by $\{x^k\}$, such that
	\[
	x^k\to x^0\,\,\,\mbox{as}\,\,\,k\to\infty.
	\]
	Let $p^k = \frac{x^0-x^k}{\lambda _k}$, $h_{k0}= v_k$.  Obviously,
	\begin{equation}
		h_{ki}(p^k)=0,\,\,\,i=0,1,2,\cdots , m,
	\end{equation}
    and
    \[
    p^k\to 0\,\,\,\mbox{as}\,\,\,k\to\infty.
    \]

    Next we will show that $h_{kj}$ is uniformly H{\"o}lder continuous near $p^k$, i.e.
    \begin{equation}\label{holder}
    	|h_{kj}(x)-h_{kj}(p^k)|\leq C|x-p^k|^{\gamma}.
    \end{equation}
    where
    \begin{equation}
    	\gamma = \begin{cases}
    		1, & \mbox{if $0\leq j<m$,}\\
    		\frac{\alpha}{2},  & \mbox{if $j=m$.}
    	\end{cases}
    \end{equation}
     We postpone the proof of (\ref{holder}) for a moment. Note that
   \begin{equation}\label{df=1}
   	h_{kj}(0)-h_{kj}(p^k) =1.
   \end{equation}
   On the other hand, it follows from (\ref{holder}) that
   \begin{equation}
   	|h_{kj}(0)-h_{kj}(p^k)|\to 0,\,\,\,\mbox{as}\,\,\, k\to \infty.
   \end{equation}
   This contradicts (\ref{df=1}), and thus rules out the possibility of Case (\romannumeral 3).

   To prove (\ref{holder}), we construct an auxiliary function $\varphi$ such that for any $x$ near $p^k$, it holds that
   \begin{equation}
   	|h_{kj}(x)-h_{kj}(p^k)|\leq |\varphi (x) - \varphi (p^k)|\leq C|x-p^k|^{\gamma}.
   \end{equation}

   Since $\partial \Omega$ is $C^2$, for $k$ sufficiently large, there exists a unit ball contained in $\Omega _k^c$ that is tangent to $\partial \Omega _k$ at $p^k$. Without loss of generality, we may assume that the ball is centered at 0.

   If $0\leq j <m$, we have that
   \[
   \begin{split}
   	&(-\triangle)h_{kj} = h_{k,j+1},\quad \mbox{in $\Omega _k$,}\\
   	&|h_{k,j+1}(x)|\leq 1.
   \end{split}
   \]
   Let
   \begin{equation}
   	\psi _1(x): = C(1-|x|^2)_+ =\begin{cases}
   		C(1-|x|^2), & x\in B_1(0)\\
   		0, & x\in B_1^c(0).
   	\end{cases}
   \end{equation}	
	choose constant $C$ such that
   	\begin{equation}
   		(-\triangle)\psi _1(x)=1.
   	\end{equation}
   	
   	Let
   	\begin{equation}
   		\psi _2(x) = \dfrac{1}{|x|^{n-2}}\psi _1\left(\dfrac{x}{|x|^2} \right)
   	\end{equation}
   	be the Kelvin transform of $\psi _1(x)$. Then
   	\[
   	(-\triangle)^{\frac{\alpha}{2}}\psi _2(x) = \dfrac{1}{|x|^{n+2}},\,\,\, x\in B_1^c(0).
   	\]
   	Let $\xi (x)$ be a smooth cutoff function such that $0\leq \xi (x)\leq 1$ in $\mathbb{R}^n$, $\xi (x) = 0$ in $B_1(0)$, $\xi (x)=1$ in $B_3(0)^c$, and
   	 \[
   	(-\triangle)\xi (x) \geq -C.
   	\]
   	Let
   	\begin{equation}
   		\varphi (x) = t\psi _2(x)+\xi (x), \quad\mbox{$t$ is a positive constant},
   	\end{equation}
   	and denote $D=(B_3(0)\backslash B_1(0))\cap \Omega _k$. For $t$ sufficiently large and $x\in D$,
   	\begin{equation}
   		\begin{split}
   			(-\triangle) \varphi (x) & = t(-\triangle)\psi _2(x) + (-\triangle)\xi(x)\\
   			& \geq \dfrac{t}{|x|^{n+2}}-C\\
   			& \geq 1\geq h_{k,j+1}.
   		\end{split}
   	\end{equation}
   	
   	Next, we prove that $\varphi \geq h_{kj}$ on $\partial D$. Notice that
   	\[
   	\partial D = (\partial D \cap \partial \Omega _k)\cup (\partial D\cap \partial B_3(0)).
   	\]
   	For $x\in (\partial D \cap \partial \Omega _k)$, the conclusion is plain since $h_{kj}(x) = 0$.
   	For any $x\in (\partial D\cap \partial B_3(0))$,
   	\[
   	\varphi (x)\geq \xi (x)\geq 1 \geq h_{kj}(x).
   	\]
   	Using the maximum principle, we have that
   	\[
   	\varphi\geq h_{kj},\quad\mbox{in $D$}.
   	\]
   	
   To show that $\varphi$ is H\"older continuous near $p^k$, it suffices to show that $\psi _2(x)$ is H\"older continuous in $D$. Indeed,
   \[
   \psi _2(x)-\psi _2(p^k)=|\psi _x(x)|\leq C(|x|-1)\leq C|x-p^k|.
   \]
   Thus, there exits a constant $C>0$, such that
   \[
   \varphi(x)-\varphi (p^k)\leq C|x-p^k|.
   \]
   Recall that $h_{kj}(p^k)=0$, then
   \begin{equation}
   	  	0\leq h_{kj}(x)-h_{kj}(p^k) = h_{kj}(x)\\
   	  	\leq \varphi (x)  = \varphi (x)-\varphi(p^k)\leq C|x-p^k|.
   \end{equation}

   If $j=m$, one can show the similar result by setting
   \begin{equation}
   	\psi _1(x) = C(1-|x|^2)^{\frac{\alpha}{2}}_+
   \end{equation}
   and
   \begin{equation}
   	\psi _2(x) = \dfrac{1}{|x|^{n-\alpha}}\psi _1\left(\dfrac{x}{|x|^2} \right).
   \end{equation}
   This proves (\ref{holder}) and completes the proof of Case (\romannumeral 3).
\end{proof}

\begin{proof}[\textbf{Proof of Corollary \ref{corollary}}]
Suppose the conclusion does not hold. Then there exists a sequence of solutions $\{w_k\}$ to (\ref{in-1}), such that
\begin{equation}
	\max \limits _\Omega (-\Delta)^iw_k\to \infty. 
\end{equation}
We can replace $u_k$ with $w_k$ in proof of theorem \ref{main theorem}, and the rest of the proof is totally same.
\end{proof}

\bibliographystyle{siam}
\bibliography{ref}
\end{document}